\documentclass[a4paper,12pt,centertags]{amsart}
\usepackage[latin1]{inputenc}
\usepackage[english]{babel}
\usepackage{cite}
\usepackage{verbatim}
\usepackage{color}
\setcounter{page}{1}

\pagestyle{myheadings}
\markboth{\small F. Della Pietra, \,\,N. Gavitone}{\small Upper bounds
  for eigenvalues of $\Sk_k$}
\usepackage{txfonts}

\usepackage{graphicx,picture}
\usepackage{amsmath,amsthm}
\usepackage{hyperref}




\usepackage[top=3.2cm, bottom=3.2cm, left=2.8cm,right=2.8cm]{geometry}


\newcommand{\diesis}{^\#}
\newtheorem{theo}{Theorem}[section]
\newtheorem{lemma}{Lemma}[section]
\newtheorem{prop}{Proposition}[section]

\theoremstyle{definition}
\newtheorem{definiz}{Definition}[section]
\newtheorem{rem}{Remark}[section]
\newtheorem{ex}{Example}[section]
\numberwithin{equation}{section}

\newcommand{\R}{\mathbb R}
\newcommand{\de}{\partial}
\newcommand{\eps}{\varepsilon}

\DeclareMathOperator{\Sk}{S}


\begin{document}
\title[Upper bounds for the eigenvalue of $\Sk_k$]{Upper bounds
  for the eigenvalues of Hessian equations}
\author[F. Della Pietra, N. Gavitone]{
  Francesco Della Pietra and Nunzia Gavitone
}
\address{Francesco Della Pietra \\
Universit\`a degli studi del Molise \\
Dipartimento di Bioscienze e Territorio \\
Via Duca degli Abruzzi \\
86039 Termoli (CB), Italia.
}
\email{francesco.dellapietra@unimol.it}

 \address{
Nunzia Gavitone \\
Universit\`a degli studi di Napoli ``Federico II''\\
Dipartimento di Matematica e Applicazioni ``R. Caccioppoli''\\
80126 Napoli, Italia.
}
\email{nunzia.gavitone@unina.it}
\keywords{Eigenvalue problems, $k$-Hessian operators, stability
  estimates.}
\subjclass[2000]{
  35P15, 35P30
}
\date{\today}
\maketitle
\begin{abstract}
  In this paper we prove some upper bounds for the Dirichlet
  eigenvalues of a class of fully nonlinear elliptic equations, namely
  the Hessian equations.
\end{abstract}
\section{Introduction}
In this paper we deal with the eigenvalue problem of the $k$-Hessian
operator, namely
\begin{equation}
  \label{autskintro}
  \left\{
    \begin{array}{ll}
      \Sk_k(D^2u)=\lambda (-u)^k &\text{in } \Omega,\\
      u=0 &\text{on } \de\Omega,
    \end{array}
  \right.
\end{equation}
where $1\le k\le n$, and $\Omega$ is a bounded, strictly convex, open set
of $\R^n$, $n\ge 2$, with $C^2$ boundary. Here $\Sk_k(D^2u)$ is the $k$-th
elementary symmetric function of the eigenvalues of the
Hessian matrix of $u\in C^2(\Omega)$ (see Section 2 for the
precise definitions). Notice that for $k=1$, $S_1(D^2u)$ reduces to
the Laplacian operator $\Delta u$, while for $k=n$, $S_n(D^2u)$ is the
Monge-Amp\`ere operator $\det D^2u$.

Our aim is to generalize some well-known estimates
involving the first eigenvalue $\lambda_1(\Omega)$ of the
Dirichlet-Laplacian in $\Omega$. In this case, the Faber-Krahn
inequality states that $\lambda_1(\Omega)$ attains its minimum at the
ball $\Omega\diesis$ with the same Lebesgue measure of $\Omega$,
that is 
\begin{equation}
  \label{fkintro}
\lambda_1(\Omega\diesis)\le \lambda_1(\Omega).
\end{equation}
Hence, a natural question which arises from \eqref{fkintro} is to give
an upper bound of $\lambda_1(\Omega)$. For example, in \cite{mak} it
is proved that for a convex plane domain of area $|\Omega|$ and perimeter
$P(\Omega)$, 
\begin{equation}
  \label{makin}
  \lambda_1(\Omega) \le 3 \frac{P^2(\Omega)}{|\Omega|^2}.
\end{equation}
The constant $c=3$ is not sharp, and P\'olya in \cite{po61} has shown that
it can be replaced by $\pi^2/{4}$. Moreover, the inequality holds for any
simply connected bounded open set $\Omega$ of $\R^2$.

Another classical result, due to Payne and Weinberger (see
\cite{pw61}), allows to obtain an upper bound of $\lambda_1(\Omega)$
in terms of $\lambda_1(\Omega\diesis)$ and the isoperimetric
deficit. More precisely, if $\Omega$ is a simply connected, bounded
open set of $\R^2$ with smooth boundary, then
\begin{equation}
  \label{pw}
  \lambda_1(\Omega) \le \lambda_1(\Omega\diesis) \left[1+C
    \left(\frac{P^2(\Omega)}{4\pi |\Omega|}-1\right) \right],
\end{equation}
where $C$ is a universal sharp constant, which can be explicitly
determined. Hence, together with the Faber-Krahn inequality it is
possible to obtain a stability estimate for $\lambda_1(\Omega)$, that
is 
\begin{equation}
  \label{stab1}
  0\le  \frac{ \lambda_1(\Omega) -
    \lambda_1(\Omega\diesis)}{\lambda_1(\Omega\diesis)} \le C
    \left(\frac{P^2(\Omega)}{4\pi |\Omega|}-1\right).
  \end{equation}
  Recently, an estimate of this kind, which involves an isoperimetric
  deficit of $\Omega$, has been obtained in the paper \cite{bnt10}
  for a larger class of operators in any dimension. In particular, the
  authors prove that, if $\Omega$ is a bounded convex open set of $\R^n$,
  then
  \begin{equation}
  \label{stab2}
  \frac{ \lambda_{1,p}(\Omega) -
    \lambda_{1,p}(\Omega^\star)}{\lambda_{1,p}(\Omega)} \le C(n,p,\Omega)
    \left(1-\frac{n^{\frac{n}{n-1}}\omega_n^{\frac{1}{n-1}}|\Omega|}
      {P(\Omega)^{\frac{n}{n-1}}}\right),
  \end{equation}
  where $\lambda_{1,p}(\Omega)$ is the first Dirichlet eigenvalue for
  the $p$-Laplace operator, $\Omega^\star$ is the ball centered at
  the origin with the same perimeter of $\Omega$. As matter of fact,
  being $\lambda_{1,p}(\Omega^\star)\le \lambda_{1,p}(\Omega\diesis)$,
 together with the Faber-Krahn inequality of the $p$-Laplacian, we
 have that the left-hand side of \eqref{stab2} is nonnegative.

 The main idea in order to prove the quoted estimates is to make use
 of a particular class of test functions, depending on the distance to
 the boundary, introduced in \cite{mak}, \cite{po61} and nowadays
 known as web functions (see for example \cite{cgweb}). 


 
 The aim of the paper is to prove estimates for the eigenvalue of
 \eqref{autskintro} in the same spirit of \eqref{makin} and
 \eqref{stab2}, when $\Omega$ is a bounded, strictly convex, open set
 with $C^2$ boundary. In particular, we show that if $1\le k\le n$, a
 Makai-type estimate holds, namely 
 \begin{equation}
   \label{makintro}
   \lambda_k(\Omega) \le  \frac{n(k+2)}{n-k +1}
   \frac{P(\Omega)^{k+1}}{|\Omega|^{k+2}} W_{k-1}(\Omega).
 \end{equation}
Here $\lambda_k(\Omega)$ denotes the eigenvalue of $S_k$ in $\Omega$,
and $W_{k-1}(\Omega)$ is the $(k-1)$-th quermass\-integral of
$\Omega$ (see Section 2 for the precise references and
definitions). 
In the Laplacian case, with $k=1$ and $n=2$, we recover
exactly \eqref{makin}. In the Monge-Amp\`ere case, it is worth to
compare \eqref{makintro} with the upper bound obtained in
\cite{bntpoin} (see Remark \ref{cri} and Example \ref{exellipse})

Regarding to the stability estimates, our results read as follows.
If $k=n$, we will prove that
\begin{equation}
   \label{tesiintro}
  \frac{\lambda_n(\Omega) -
    \lambda_n(\Omega^*_{n-1})}{\lambda_n(\Omega)} \le C_\Omega
  \left(|\Omega^*_{n-1}|-|\Omega| \right).
\end{equation}

 Here $\Omega^*_i$, $i=0,\ldots n-1$ denotes the ball centered at the
 origin with the same $i$-th quermass\-integral of $\Omega$. Hence, in
 conjunction with the Faber-Krahn inequality for the 
 Monge-Amp\`ere operator (see \cite{bt07} and \cite{ga09}), 
 the left-hand side of \eqref{tesiintro} is nonnegative and we have a
 stability estimate of $\lambda_n(\Omega)$. 
 
In the case $1\le k\le n-1$, we will obtain that
\begin{equation}
  \label{stab3}
    \frac{\lambda_k(\Omega) -
      \lambda_k(\Omega^*_{k})}{\lambda_k(\Omega)} \le C_\Omega
    \left(|\Omega^*_k|-|\Omega| \right).
  \end{equation}
Again, under suitable assumptions on $\Omega$, the above inequality,
in conjunction with the Faber-Krahn inequality for $\Sk_k$
(see \cite{ga09} and Section 2.2), gives a quantitative estimate.

The paper is organized as follows. In Section 2, we recall some basic
definitions of convex analysis and the main properties of the
eigenvalues of $\Sk_k$. Then, in Section 3 we prove some preliminary
results necessary to prove the main results. In particular, we cannot
apply directly the method of web functions, since they are not
sufficiently regular in order to be used as test functions in
\eqref{autskintro}. Then, we construct a suitable smooth
approximating sequence of the distance function.  
Finally, in Section 4 we state precisely the main results and give the
proofs. 

\section{Notation and preliminaries}
Throughout the paper, we will denote with $\Omega$ a set of
$\R^n$, $n\ge 2$ such that
\begin{equation}
  \label{ipomega}
    \Omega\text{ is a bounded, strictly convex, open set with }C^2
    \text{ boundary}.
\end{equation}
Given a function $u \in C^2(\Omega)$, we denote by $\lambda(D^2u)=
(\lambda_1,\lambda_2, \ldots,\lambda_n)$ the vector of the 
eigenvalues of $D^2u$.  The $k$-Hessian operator $\Sk_k(D^2u)$, with
$k=1,2,\ldots,n$, is
\begin{equation}
  \Sk_k(D^2u)=\sum_{i_1<i_2<\cdots<i_k} \lambda_{i_1} \cdot
  \lambda_{i_2} \cdots \lambda_{i_k}.
\end{equation}
Hence $\Sk_k(D^2u)$ is the sum of all $k \times k$ principal minors of
the matrix $D^2u$.

The $k$-Hessian operator can be written also in divergence form,
that is
\begin{equation}
  \label{div}
  \Sk_k(D^2u)=\frac{1}{k}\sum_{i,j=1}^n (\Sk_k^{ij}u_i)_j,
\end{equation}
where $S_k^{ij} = \frac{\partial S_k(D^2u)}{\partial u_{ij}}$ (see
for instance \cite{trudi1}, \cite{trudi2}, \cite{wangeigen}).

Well known examples ok $k$-Hessian operators are $\Sk_1(D^2u)=\Delta
u$, the Laplace operator, and $\Sk_n(D^2u)=\det(D^2u)$, the
Monge-Ampère operator.

It is well-known that $\Sk_1(D^2u)$ is elliptic. This property is not
true in general for $k>1$. As matter of fact, the $k$-Hessian operator
is elliptic when it acts on the class of the so-called
$k$-convex function, defined below.
\begin{definiz}
Let $\Omega$ as in \eqref{ipomega}. A function $u \in C^2(\Omega)$ is
a $k$-convex function (strictly 
$k$-convex) in $\Omega$ if
\begin{equation}
\Sk_j(D^2u)\geq 0 \text{ }(>0) \quad \text{for }j=1,
\ldots, k.
\end{equation}
We denote the class of $k$-convex functions in $\Omega$ such
that $u \in C^2(\Omega)\cap C(\bar{\Omega})$ and  $u=0$ on $\partial
\Omega$ by $\Phi^2_k(\Omega)$. 
\end{definiz}
If we define with $\Gamma_k$ the following convex open cone
\begin{equation*}
  \Gamma_k=\{ \lambda \in \R^n : S_1(\lambda)>0, S_2(\lambda)>0,
  \ldots, S_k(\lambda)>0\},
\end{equation*}
in \cite{ivo} it is proven that $\Gamma_k$ is the cone of
ellipticity of $\Sk_k$. Hence the $k$-Hessian operator is elliptic
with respect to the $k-$convex functions.

If $u$ is $k$-convex, the following Newton inequalities hold:
\begin{equation}
  \label{newton}
  \frac{\Delta u}{n} \ge \ldots \ge \left(
    \binom{n}{k-1}^{-1} {S_{k-1}(D^2 u)} \right)^{\frac {1} {k-1}} \ge
  \left(\binom{n}{k}^{-1}S_{k}(D^2u)\right)^{\frac{1}{k}}.
\end{equation}

By \eqref{newton} it follows that the $k$-convex functions equal to
zero on the boundary of $\Omega$ are negative in $\Omega$.

We go on by recalling some definitions of convex analysis which will
be largely used in next sections. Standard references for this topic
are \cite{bz}, \cite{schn}.

\subsection{Quermassintegrals and Alexsandrov-Fenchel inequalities}

Let $K$ be a convex body, and let $\rho>0$. We denote with $|K|$ the
Lebesgue measure of $K$, with $P(K)$ the perimeter of $K$ and with
$\omega_n$ the measure of the unit ball in $\R^n$.

The well-known Steiner formula for the Minkowski sum is
\[
|K+\rho B_1| =\sum_{i=0}^{n} \binom{n}{i} W_i(K) \rho^i.
\]
The coefficient $W_i(K)$, $i=0,\ldots,n$ is known as the $i$-th
quermassintegral of $K$. Some special cases are $W_0(K)=|K|$,
$nW_1(K)=P(K)$, $W_n(K)=\omega_n$. If $K$ as $C^2$ boundary, with
nonvanishing Gaussian curvature, the quermassintegrals can be related
to the principal curvatures of $\de K$. Indeed, in such a case
\[
W_i(K)=\frac 1 n \int_{\de K} H_{i-1} d \mathcal H^{n-1}, \quad
i={1,\ldots n}.
\]
Here $H_j$ denotes the $j-$th normalized elementary symmetric function
of the principal curvatures $\kappa_1,\ldots,\kappa_{n-1}$ of $\de K$,
that is $H_0=1$ and
\[
H_j= \binom{n-1}{j}^{-1} \sum_{1\le i_1\le \ldots \le i_j\le n-1}
\kappa_{i_1}\cdots \kappa_{i_j},\quad j={1,\ldots,n-1}.
\]

An immediate computation shows that if $B_R$ is a ball of radius $R$,
then
\begin{equation}
  \label{querball}
  W_i(B_R)= \omega_n R^{n-i}, \quad i=0,\ldots,n.
\end{equation}

A Steiner formula holds true also for every quermassintegral, that is
\[
W_p(K+\rho B_1)= \sum_{i=0}^{n-p} \binom{n-p}{i} W_{p+i}(K)\rho^i,\quad
p=0,\ldots,n-1.
\]
This formula immediately gives that
\begin{equation}
  \label{steinercons}
  \lim_{\rho\rightarrow 0^+}\frac{W_p(K+B_\rho)-W_p(K)}{\rho}=
  (n-p)W_{p+1}(K),\quad p=0,\ldots,n-1.  
\end{equation}

The Aleksandrov-Fenchel inequalities state that
\begin{equation}
  \label{afineq}
\left( \frac{W_j(K)}{\omega_n} \right)^{\frac{1}{n-j}} \ge \left(
  \frac{W_i(K)}{\omega_n} \right)^{\frac{1}{n-i}}, \quad 0\le i < j
\le n-1,
\end{equation}
where the inequality is replaced by an equality if and only if $K$ is
a ball.

In what follows, we use the Aleksandrov-Fenchel inequalities for
particular values of $i$ and $j$. If $i=1$, and $j=k-1$, we have that
\begin{equation}\label{af-2}
W_{k-1}(K) \ge \omega_n^{\frac{k}{n-1}} n^{\frac{n-k+1}{n-1}}
P(K)^{\frac{n-k+1}{n-1}}, \quad
3\le k \le n-1.
\end{equation}
When $i=0$ and $j=1$, we have the classical isoperimetric inequality:
\[
P(K) \ge n \omega_n^{\frac 1 n} |K|^{1-\frac 1 n}.
\]
Moreover, if $i=k-1$, and $j=k$, we have
\[
W_k(K) \ge \omega_n^{\frac{1}{n-k+1}} W_{k-1}(K)^{\frac{n-k}{n-k+1}}.
\]

\subsection{Eigenvalue problems for $\Sk_k$}
Let us consider the eigenvalue problem associated to
$k$-Hessian operator, namely
\begin{equation}
  \label{autsk}
  \left\{
    \begin{array}{ll}
      S_k(D^2u)=\lambda (-u)^k &\text{in } \Omega,\\
      u=0 &\text{on } \de\Omega.
    \end{array}
  \right.
\end{equation}

The following existence result holds (see \cite{lionsma} for $k=n$,
and \cite{wangeigen}, \cite{geng} in the general case):
\begin{theo}
  \label{defaut}
  Let $\Omega$ as in \eqref{ipomega}. Then, there exists a positive
  constant $\lambda_k(\Omega)$ depending only on $n,k$, and $\Omega$,
  such that problem (\ref{autsk}) admits a 
  solution $u \in C^2(\Omega)\cap C^{1,1}(\overline{\Omega})$,
  negative in $\Omega$, for
  $\lambda=\lambda_k(\Omega)$ and $u$ is unique up to positive
  scalar multiplication. Moreover, $\lambda_k(\Omega)$ has the following
  variational characterization:
  \begin{equation}
    \label{carvar} \lambda_k(\Omega)=\min_{\substack{u \in
        \Phi_k^2(\Omega) \\
        u \ne 0 }} \displaystyle \frac{\int_{\Omega}(-u)
      S_k(D^2u)\,dx}{\int_{\Omega}(-u)^{k+1}\,dx}.
  \end{equation}
\end{theo}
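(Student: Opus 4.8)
The plan is to produce $\lambda_k(\Omega)$ as the minimum in \eqref{carvar} and then to show that a minimizer is, after normalization, a solution of \eqref{autsk}. Since $\Sk_k(D^2u)$ is homogeneous of degree $k$ in $u$, both the numerator $\int_\Omega(-u)\Sk_k(D^2u)\,dx$ and the denominator $\int_\Omega(-u)^{k+1}\,dx$ are homogeneous of degree $k+1$, so the quotient is scaling invariant and minimizing it is equivalent to minimizing $\mathcal F(u):=\int_\Omega(-u)\Sk_k(D^2u)\,dx$ over $\{u\in\Phi_k^2(\Omega):\int_\Omega(-u)^{k+1}\,dx=1\}$. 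Using the divergence form \eqref{div} and integrating by parts (the boundary term vanishes because $u=0$ on $\de\Omega$) gives the symmetric expression $\mathcal F(u)=\tfrac1k\int_\Omega \Sk_k^{ij}(D^2u)\,u_iu_j\,dx$, which is nonnegative for $u\in\Phi_k^2(\Omega)$ since $(\Sk_k^{ij})$ is positive semidefinite on the closure of $\Gamma_k$ (G\aa rding), and vanishes only for $u\equiv0$. A Hessian--Sobolev inequality of Trudinger--Wang type then bounds $\mathcal F(u)$ from below by a positive multiple of a suitable norm of $u$, so the infimum $\lambda_k(\Omega)$ is strictly positive; it is finite by testing with any fixed strictly $k$-convex function.

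Next I would establish existence of a minimizer. Take a minimizing sequence $u_m\in\Phi_k^2(\Omega)$ with $\int_\Omega(-u_m)^{k+1}\,dx=1$ and $\mathcal F(u_m)\to\lambda_k(\Omega)$. The natural functional setting is the completion of $\Phi_k^2(\Omega)$ with respect to the energy $u\mapsto\mathcal F(u)^{1/(k+1)}$ (the Hessian--Sobolev space of Tian--Wang and Trudinger--Wang); there the sequence is bounded, the Hessian measures $\Sk_k(D^2u_m)$ are weakly-$*$ precompact and weakly continuous along the sequence, the energy $\mathcal F$ is weakly lower semicontinuous, and the embedding into $L^{k+1}(\Omega)$ is compact. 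Passing to the limit yields $u_0$ with $\int_\Omega(-u_0)^{k+1}\,dx=1$ and $\mathcal F(u_0)\le\lambda_k(\Omega)$, hence a minimizer.

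I would then compute the first variation. For $\varphi\in C_0^\infty(\Omega)$, $\mathcal F$ is differentiable, and using the divergence-free identity $\sum_j\de_j\Sk_k^{ij}(D^2u)=0$ (which is what makes \eqref{div} consistent with Euler's relation $\sum_{ij}\Sk_k^{ij}u_{ij}=k\Sk_k$) together with the homogeneity of $\mathcal F$, one obtains $\langle\mathcal F'(u_0),\varphi\rangle=(k+1)\int_\Omega(-\varphi)\Sk_k(D^2u_0)\,dx$, while the constraint functional has variation $(k+1)\int_\Omega(-u_0)^k(-\varphi)\,dx$. The Lagrange multiplier rule gives that $u_0$ is a weak solution of $\Sk_k(D^2u_0)=\lambda_k(\Omega)(-u_0)^k$ in $\Omega$, $u_0=0$ on $\de\Omega$, the multiplier being $\mathcal F(u_0)/\int_\Omega(-u_0)^{k+1}\,dx=\lambda_k(\Omega)$. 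Regularity then follows from the Dirichlet theory for Hessian equations: the right-hand side is positive (since $u_0<0$ by \eqref{newton}), so strict $k$-convexity is propagated; interior $C^\infty$ regularity comes from Caffarelli--Nirenberg--Spruck and Trudinger's estimates, and the hypotheses \eqref{ipomega} (strict convexity, $C^2$ boundary) give global $C^{1,1}(\overline\Omega)$ regularity. Conversely, if $u$ solves \eqref{autsk} for some $\lambda$ with $u<0$, then testing against $u$ and invoking \eqref{carvar} forces $\lambda\ge\lambda_k(\Omega)$, and equality in fact holds by simplicity.

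For uniqueness up to positive multiples (and simplicity of $\lambda_k(\Omega)$) I would argue as for the principal eigenvalue of other fully nonlinear operators: given two negative solutions $u,v$, one compares them along the curve $t\mapsto\big((1-t)(-u)^{k+1}+t(-v)^{k+1}\big)^{1/(k+1)}$, exploiting a Picone-type inequality for $\Sk_k$ to show that the Rayleigh quotient is strictly convex transversally to the scaling direction, whence $u$ and $v$ must be proportional; alternatively one uses the strong maximum principle and Hopf lemma for $k$-convex solutions to exclude a sign change of a putative second independent eigenfunction. The two genuinely delicate points are the compactness of the minimizing sequence in the nonstandard Hessian--Sobolev space — for $k\ge2$ the operator is not coercive in any classical Sobolev norm, so one must rely on the weak continuity of Hessian measures — and the simplicity of $\lambda_k(\Omega)$, where the fully nonlinear nature of $\Sk_k$ makes the usual convexity arguments nontrivial to implement.
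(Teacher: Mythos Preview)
The paper does not prove this theorem at all: it is stated as a known result and attributed to \cite{lionsma} for $k=n$ and to \cite{wangeigen}, \cite{geng} for general $k$. There is therefore no ``paper's own proof'' to compare your proposal against; in the context of this article the correct move is simply to cite those references.

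That said, your outline is broadly in the spirit of Wang's original argument in \cite{wangeigen}: set up the Rayleigh quotient, use the Hessian--Sobolev machinery for coercivity and compactness, derive the Euler--Lagrange equation, and invoke the Dirichlet regularity theory for $k$-Hessian equations. A couple of points in your sketch would need care if you were to flesh it out. First, the compactness step is indeed the heart of the matter, and your description is rather vague: one really needs the specific weak-continuity and compact-embedding results for $k$-Hessian measures developed in \cite{trudi1,trudi2,wangeigen}, and the minimizer lands a priori only in the weak space, so the passage to a $C^2$ solution requires a separate existence argument for the Dirichlet problem with the right-hand side $\lambda_k(-u_0)^k$, not just interior regularity. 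Second, your uniqueness sketch is incomplete: the Picone-type convexity along $t\mapsto\big((1-t)(-u)^{k+1}+t(-v)^{k+1}\big)^{1/(k+1)}$ is not standard for $\Sk_k$ with $k\ge 2$, and the ``exclude a sign change'' alternative does not by itself give proportionality of two negative eigenfunctions. In the cited works uniqueness is obtained instead by a scaling/comparison argument exploiting homogeneity and the strong maximum principle for the fully nonlinear operator.
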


As matter of fact, if $k<n$ the above theorem holds under a more
general assumption on $\Omega$, namely requiring that $\Omega$ is
strictly $k$-convex (see \cite{wangeigen}, \cite{geng}).

We refer to $\lambda_k(\Omega)$ and $u$, respectively, as the
eigenvalue and eigenfunction of $k$-Hessian 
operator. Moreover, given a function $u\in
\Phi^2_k(\Omega)$, the quantity $\int_{\Omega}(-u) 
S_k(D^2u)\,dx$ is known as $k$-Hessian integral. Using the divergence
form of $\Sk_k$ and the coarea formula, in \cite{trudiso} it is proved 
that
\begin{equation}
\label{carvar2}
  \int_{\Omega}(-u) S_k(D^2u)\,dx = \int_0^{\|u\|_{\infty}} dt \int_{\{u=-t\}}
  H_{k-1}(\{u=-t\}) |Du|^{k} d \mathcal H^{n-1}.
\end{equation}
Hence, the variational formulation \eqref{carvar} can be written in
terms of \eqref{carvar2}.

As matter of fact, we observe that if $k=1$, or $k=n$,
$\lambda_k(\Omega)$ coincides respectively with the first eigenvalue
of the Laplacian operator, or with the eigenvalue of Monge-Amp\`ere
operator. 

If $k=1$, the well-known Faber-Krahn inequality states that
\[
\lambda_1(\Omega) \ge \lambda_1(\Omega\diesis),
\]
where $\Omega\diesis$ is the ball centered at the origin
with the same Lebesgue measure of $\Omega$. Moreover, the equality
holds if $\Omega=\Omega\diesis$. 

In \cite{bt07}, \cite{ga09} it is proved that if $k=n$ and $\Omega$ is
a bounded strictly convex open set, then 
\begin{equation}
  \label{fkdet}
\lambda_n(\Omega) \ge \lambda_n(\Omega_{n-1}^*),  
\end{equation}
where $\Omega^*_{n-1}$ is the ball centered at the origin such that
$W_{n-1}(\Omega)=W_{n-1}(\Omega^*_{n-1})$. We explicitly observe that
if $n=2$, $\Omega^*_{1}$ is the ball with the same perimeter of
$\Omega$. In general, in \cite{ga09} it is proven that if $\Omega$ is
a strictly convex set such that the eigenfunctions have convex level
sets, then, for $2\le k \le n-1$,
\begin{equation}
  \label{fksk}
\lambda_k(\Omega) \ge \lambda_k(\Omega^*_{k-1}),  
\end{equation}
where $\Omega^*_{k-1}$ is the ball centered at the origin such that
$W_{k-1}(\Omega)=W_{k-1}(\Omega^*_{k-1})$.

The additional hypothesis on $\Omega$ seems to be natural. Indeed, for
$k=1$ this is due to the Korevaar concavity maximum principle (see
\cite{kor}), while it is trivial for $k=n$.
For the $k$-Hessian operators, at least in the case $n=3$ and $k=2$,
it in \cite{lmx10} and \cite{sa12} is proved that if $\Omega$ is
sufficiently smooth, the eigenfunctions of $\Sk_2$ have convex level
sets. Up to our knowledge, the general case is an open problem.

We observe that a consequence of the Aleksandrov-Fenchel inequalities
is that, for a set $\Omega$ as in \eqref{ipomega}, then
\begin{equation}
  \label{incl}
  \Omega\diesis=\Omega^*_0 \subseteq \Omega^*_{1} \subseteq \ldots
  \Omega^*_{k-1}\subseteq \Omega^*_{k} \subseteq \ldots \subseteq
  \Omega^*_{n-1}, \quad k=1,\ldots n-1,
\end{equation}
with the equal sign holding if and only if $\Omega$ is a ball.
Indeed, denoted by $R_k$ the radius of $\Omega^*_k$, then by
\eqref{querball} and \eqref{afineq} we have
\[
R_{k-1}=\left(\frac{W_{k-1}(\Omega)}{\omega_n}\right)^{\frac{1}{n-k+1}}
\le
\left(\frac{W_k(\Omega)}{\omega_n}\right)^{\frac{1}{n-k}} \le R_{k}.
\]

From \eqref{incl} and the monotonicity of
$\lambda_k(\cdot)$ with respect to the inclusion of sets, it follows
that, for a set $\Omega$ such that \eqref{fksk} holds, we have
\[
\lambda_k(\Omega) \ge \lambda_k(\Omega^*_{k-1}) \ge
\lambda_k(\Omega^*_k) \ge \ldots\ge \lambda_k(\Omega^*_{n-1}).
\]

\section{Some useful preliminary results}
Let $\Omega$ as in \eqref{ipomega}, and $d(x)$ the distance of a
point $x\in \Omega$ to the boundary $\de \Omega$. We denote by 
\[
\Omega_t=\{ x \in \Omega \colon d(x)>t \}, \quad t\in [0,r_\Omega],
\]
where $r_\Omega$ is the inradius of $\Omega$. The Brunn-Minkowski
inequality for quermassintegrals (\cite[p.339]{schn}) and the
concavity of the distance function give that the function
$W_k(\Omega_t)^{\frac{1}{n-k}}$ is concave in $[0,r_\Omega]$. Hence,
$W_k(\Omega_t)$, $t\in [0,r_\Omega]$ is a decreasing, absolutely
continuous function.

\begin{lemma}
  \label{derquer}
  For any $0\le p \le n-1$, and for almost every $t\in ]0,r_\Omega[$,
  \begin{equation}
    \label{lemmader}
    -\frac{d}{dt} W_p(\Omega_t) \ge (n-p) W_{p+1} (\Omega_t),  
  \end{equation}
  where the equality sign holds if $\Omega$ is a ball.
\end{lemma}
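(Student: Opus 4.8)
The plan is to compare the inner parallel sets $\Omega_t$ at nearby levels by means of the elementary inclusion $\Omega_t+B_h\subseteq\Omega_{t-h}$, valid for $0<h<t<r_\Omega$, and then to exploit the Steiner formula for quermassintegrals together with the monotonicity of $W_p$ under inclusion of convex bodies. Concretely, if $x\in\Omega_t$, i.e.\ $d(x)>t$, and $|y|\le h$, then $d(x+y)\ge d(x)-|y|>t-h$, so $x+y\in\Omega_{t-h}$; this proves the inclusion. Since $d$ is concave on $\Omega$, each $\Omega_t$ with $t<r_\Omega$ is an open convex set whose closure is a convex body, so the quermassintegrals $W_p(\Omega_t)$ are well defined; and, as already noted, $t\mapsto W_p(\Omega_t)^{\frac{1}{n-p}}$ is concave on $[0,r_\Omega]$, hence $t\mapsto W_p(\Omega_t)$ is non-increasing and locally Lipschitz on $(0,r_\Omega)$, in particular differentiable at almost every point.

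Next I would use the monotonicity of quermassintegrals under inclusion, $W_p(\Omega_{t-h})\ge W_p(\Omega_t+B_h)$, and then expand the right-hand side via the Steiner formula for $W_p$ (equivalently, invoke its consequence \eqref{steinercons}) to obtain
\begin{equation*}
  W_p(\Omega_{t-h})-W_p(\Omega_t)\ \ge\ (n-p)\,W_{p+1}(\Omega_t)\,h+o(h)\qquad(h\to 0^+).
\end{equation*}
Dividing by $h>0$ and letting $h\to 0^+$, at every $t$ where $W_p(\Omega_t)$ is differentiable the left-hand quotient converges to $-\frac{d}{dt}W_p(\Omega_t)$, which yields \eqref{lemmader} for a.e.\ $t\in(0,r_\Omega)$. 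For the equality statement, if $\Omega=B_R$ then $\Omega_t=B_{R-t}$, and \eqref{querball} gives $W_p(\Omega_t)=\omega_n(R-t)^{n-p}$, so both sides of \eqref{lemmader} reduce to $(n-p)\,\omega_n(R-t)^{n-p-1}$.

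I do not expect a genuine obstacle here: the only point needing slight care is passing from the one-sided difference quotient to the two-sided derivative at points of differentiability, which is routine for monotone functions. The substance of the argument is simply the inclusion $\Omega_t+B_h\subseteq\Omega_{t-h}$ combined with the first-order Steiner expansion of $W_p(\Omega_t+B_h)$.
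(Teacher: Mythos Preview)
Your argument is correct and follows essentially the same route as the paper: the key inclusion $\Omega_t+B_h\subseteq\Omega_{t-h}$, monotonicity of $W_p$ under inclusion, and the first-order Steiner expansion \eqref{steinercons} to pass to the derivative. If anything, you supply more detail (the explicit verification of the inclusion via the $1$-Lipschitz property of $d$, the a.e.\ differentiability, and the direct computation of the equality case via \eqref{querball}).
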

\begin{proof}
  It is not difficult to prove that, if $B_1$ is the unit ball
  centered at the origin, we have
  \[
  \Omega_t + \rho B_1 \subset \Omega_{t-\rho},\quad 0<\rho<t,
  \]
  and the equality holds when $\Omega$ is a ball. Since the
  quermassintegral $W_p(K)$ is monotone with respect to the inclusion
  of convex sets, the above relation and \eqref{steinercons} give that
  \[
 -\frac{d}{dt} W_p(\Omega_t) = \lim_{\rho\rightarrow 0^+}
 \frac{W_p(\Omega_{t-\rho}) - W_p(\Omega_t)}{\rho} \ge \lim_{\rho\rightarrow 0^+}
 \frac{W_p(\Omega_{t}+\rho B_1) - W_p(\Omega_t)}{\rho} = (n-p)W_{p+1}(\Omega_t)
 \]
 for almost every  $t\in]0,r_\Omega[$.
\end{proof}
\begin{rem}
  As matter of fact, it is well-known that the inequality
  \eqref{lemmader} holds as an equality when $p=0$. In such a
  case $W_0(\Omega_t)=|\Omega_t|$,
  $W_1(\Omega_t)=nP(\Omega_t)$. Moreover, using the coarea formula,
  and being $d\in W^{1,\infty}(\Omega)$ with $|Dd|=1$ a.e., we have for
  a.e. $t\in ]0,r_\Omega[$
  \begin{equation}
    \label{dermis}
  -\frac{d}{dt} |\Omega_t|= \int_{\{d=t\}} \frac{1}{|Dd|} d\mathcal
  H^{n-1} = P(\Omega_t).
  \end{equation}
\end{rem}
An immediate consequence of Lemma \ref{derquer} is the following result.
\begin{lemma} \label{derquer2}
  Let $u(x)=f(d(x))$, where $f\colon[0,+\infty[\rightarrow[0,+\infty[$
  is a strictly $C^1$ function with $f(0)=0$. Set
  \[
  E_t=\{x\in\Omega\colon u(x)>t\}=\Omega_{f^{-1}(t)}.
  \]
  Then, for $0\le p \le n-1$, and for a.e. $t\in]0,r_\Omega[$, 
  \[
  -\frac{d}{dt} W_{p}(E_t) \ge (n-p) \frac{W_{p+1} (E_t)}{|Du|_{u=t}}.
  \]
\end{lemma}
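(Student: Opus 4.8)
The plan is to reduce everything to Lemma \ref{derquer} by the change of variable $s=f^{-1}(t)$. Since $f$ is strictly $C^1$ (so that $f'>0$) with $f(0)=0$, the inverse $f^{-1}$ is a $C^1$, strictly increasing bijection onto its image, and by the very definition of $E_t$ we have $W_p(E_t)=W_p(\Omega_{f^{-1}(t)})$. First I would note that $t\mapsto W_p(E_t)$ is the composition of the decreasing, absolutely continuous map $s\mapsto W_p(\Omega_s)$ (whose absolute continuity on $[0,r_\Omega]$ was recorded at the beginning of this section) with the monotone, locally Lipschitz map $f^{-1}$; since composing an absolutely continuous function with a monotone locally Lipschitz one preserves absolute continuity on compact subintervals, $t\mapsto W_p(E_t)$ is absolutely continuous, hence differentiable a.e. and equal to the integral of its derivative.

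Next I would apply the chain rule: for a.e.\ $t$,
\[
\frac{d}{dt}W_p(E_t)=\left.\frac{d}{ds}W_p(\Omega_s)\right|_{s=f^{-1}(t)}\cdot\frac{1}{f'(f^{-1}(t))}.
\]
The validity of this identity at almost every $t$ is justified because $f^{-1}$, being $C^1$ with nonvanishing derivative, satisfies the Luzin $N$ property, so it carries the (Lebesgue-null) set of points where $s\mapsto W_p(\Omega_s)$ fails to be differentiable to a null set.

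Then I would invoke Lemma \ref{derquer}, which gives $\dfrac{d}{ds}W_p(\Omega_s)\le-(n-p)W_{p+1}(\Omega_s)$ for a.e.\ $s$. Combining this with $f'>0$ and with $W_{p+1}(\Omega_{f^{-1}(t)})=W_{p+1}(E_t)$ yields
\[
-\frac{d}{dt}W_p(E_t)\ge\frac{(n-p)\,W_{p+1}(E_t)}{f'(f^{-1}(t))}.
\]
Finally I would identify the denominator with $|Du|$ on $\{u=t\}$: since $u=f(d)$ and $|Dd|=1$ a.e.\ in $\Omega$, on the level set $\{u=t\}=\{d=f^{-1}(t)\}$ one has $|Du|=f'(d)\,|Dd|=f'(f^{-1}(t))$, which is constant along that level set; writing this common value as $|Du|_{u=t}$ gives exactly the asserted inequality.

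The only genuinely delicate point is the almost-everywhere validity of the chain rule for the composition of the merely absolutely continuous function $s\mapsto W_p(\Omega_s)$ with $f^{-1}$; this is precisely where the hypothesis that $f$ is \emph{strictly} $C^1$ is used, ensuring $f^{-1}\in C^1$ with bounded, nonvanishing derivative on compact intervals. Everything else is a direct transcription of Lemma \ref{derquer}.
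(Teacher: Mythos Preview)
Your argument is correct and is exactly the intended one: the paper does not give a separate proof but merely records Lemma~\ref{derquer2} as ``an immediate consequence of Lemma~\ref{derquer}'', i.e., the change of variable $s=f^{-1}(t)$ together with the identification $|Du|_{u=t}=f'(f^{-1}(t))$. Your added care about the almost-everywhere validity of the chain rule is more detail than the paper supplies, but it is in the same spirit.
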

We conclude the Section with other two results which will be used in
next sections. The first one concerns an integral inequality, while
the second gives an approximation of the distance 
with suitable smooth functions.
\begin{lemma}
  \label{lemd}
  Let $f\colon [0,+\infty[\rightarrow \R$ a $C^1$ nondecreasing
  function. Denoted with $P(t)=P(\Omega_t)$ and with $r_\Omega$ the
  inradius of $\Omega$, then
  \begin{equation}
    \label{eq:1}
    \int_0^{r_\Omega} f(t)P(t)\, dt \ge P(\Omega)
    \int_0^{|\Omega|/P(\Omega)} f(t)\, dt.
  \end{equation}
\end{lemma}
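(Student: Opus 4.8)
The plan is to reduce the claimed inequality to a one‑dimensional rearrangement statement by comparing the actual perimeter profile $P(t)=P(\Omega_t)$ with the "ball‑like" profile that the right‑hand side implicitly uses. Recall from \eqref{dermis} that $-\frac{d}{dt}|\Omega_t| = P(\Omega_t) = P(t)$, so that $|\Omega| = \int_0^{r_\Omega} P(t)\,dt$. Moreover, by Lemma \ref{derquer} with $p=0$ (in fact $p=1$ gives the curvature refinement, but here $p=0$ suffices together with monotonicity), $P(t)$ is nonincreasing on $[0,r_\Omega]$: since $\Omega_t+\rho B_1\subset\Omega_{t-\rho}$ and perimeter is monotone under inclusion of convex sets, $P(\Omega_t)\le P(\Omega_{t-\rho})$. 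Set $t_0 = |\Omega|/P(\Omega)$; note $t_0\le r_\Omega$, because $|\Omega| = \int_0^{r_\Omega}P(t)\,dt \le P(0)\,r_\Omega = P(\Omega)\,r_\Omega$.

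The key step is the following comparison: because $P$ is nonincreasing with $P(0)=P(\Omega)$ and $\int_0^{r_\Omega}P(t)\,dt = |\Omega| = \int_0^{t_0}P(\Omega)\,dt$, the function $P(t)$ lies above the constant $P(\Omega)$ on an initial interval and below it afterwards, in the sense that there is a threshold $s^\ast\in[0,r_\Omega]$ with $P(t)\ge P(\Omega)$ for $t<s^\ast$ and $P(t)\le P(\Omega)$ for $t>s^\ast$; in fact since $P$ is nonincreasing and $P(0)=P(\Omega)$ we simply have $P(t)\le P(\Omega)$ for all $t$, and the mass identity forces $P$ to spend "enough" measure before $t_0$. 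Concretely, for the step function $P_0(t) = P(\Omega)\,\mathbf 1_{[0,t_0]}(t)$ one checks $\int_0^\tau (P(t)-P_0(t))\,dt \le 0$ for every $\tau\in[0,r_\Omega]$: for $\tau\le t_0$ this is $\int_0^\tau(P(t)-P(\Omega))\,dt\le0$ since $P\le P(\Omega)$; for $\tau\ge t_0$ it equals $\int_0^{r_\Omega}(P-P_0) + \int_\tau^{r_\Omega}(P_0-P) = 0 - \int_\tau^{r_\Omega}(-P(t))\,dt \le 0$. Thus the measure $P_0(t)\,dt$ dominates $P(t)\,dt$ in the convex (second‑stochastic‑dominance) order on $[0,r_\Omega]$.

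With this in hand I would integrate by parts against the nondecreasing $C^1$ function $f$. Writing $G(\tau)=\int_0^\tau\big(P_0(t)-P(t)\big)\,dt\ge 0$ with $G(0)=G(r_\Omega)=0$, we get
\[
\int_0^{r_\Omega} f(t)\,\big(P_0(t)-P(t)\big)\,dt
= \big[f(t)G(t)\big]_0^{r_\Omega} - \int_0^{r_\Omega} f'(t)\,G(t)\,dt
= -\int_0^{r_\Omega} f'(t)\,G(t)\,dt \le 0,
\]
since $f'\ge 0$ and $G\ge0$. Rearranging and using $P_0(t)=P(\Omega)\mathbf 1_{[0,t_0]}$ yields
\[
\int_0^{r_\Omega} f(t)P(t)\,dt \ge \int_0^{r_\Omega} f(t)P_0(t)\,dt
= P(\Omega)\int_0^{t_0} f(t)\,dt = P(\Omega)\int_0^{|\Omega|/P(\Omega)} f(t)\,dt,
\]
which is \eqref{eq:1}.

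The main obstacle is making the two preliminary facts rigorous with only the regularity at hand: that $P(t)=P(\Omega_t)$ is nonincreasing and absolutely continuous on $[0,r_\Omega]$ (so that the integration by parts and \eqref{dermis} are legitimate), and that $t_0\le r_\Omega$. Both follow from the Brunn–Minkowski / Aleksandrov–Fenchel machinery recalled above — the concavity of $t\mapsto W_1(\Omega_t)^{1/(n-1)}$ on $[0,r_\Omega]$ gives monotonicity and absolute continuity of $P(\Omega_t)$, and the isoperimetric‑type bound $|\Omega|\le r_\Omega P(\Omega)$ gives $t_0\le r_\Omega$ — so once these are quoted the remaining argument is the elementary rearrangement estimate above.
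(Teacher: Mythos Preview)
Your argument is correct and essentially identical to the paper's: both introduce the step function $\tilde P(t)=P(\Omega)\chi_{[0,|\Omega|/P(\Omega)]}(t)$, verify that $\int_0^s P\le\int_0^s\tilde P$ for all $s$ with equality at $s=r_\Omega$, and then conclude via integration by parts against the nondecreasing $f$ (the paper merely asserts this last step, while you write it out explicitly). There is only a harmless sign slip in your display for $\tau\ge t_0$: it should read $0+\int_\tau^{r_\Omega}(-P(t))\,dt=-\int_\tau^{r_\Omega}P(t)\,dt\le 0$.
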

\begin{proof}
  We first observe that from \eqref{dermis} it holds that
  \[
  |\Omega| = \int_0^{r_\Omega} P(t)dt \le P(\Omega) r_\Omega.
  \]
  Then we can define the auxiliary function
  \[
  \tilde P(t)=
  \begin{cases}
    P(\Omega) & \text{if } 0\le t \le \frac{|\Omega|}{P(\Omega)},\\
    0 & \text{if } \frac{|\Omega|}{P(\Omega)}< t \le r_\Omega.
  \end{cases}
  \]
  It is easy to see that
  \begin{equation}
    \label{eq:2}
    \int_0^s P(t) dt \le   \int_0^s \tilde P(t) dt,\quad \forall
  s\in[0,r_\Omega[,
\end{equation}
and
\begin{equation}
  \label{eq:3}
  \int_0^{r_\Omega} P(t) dt =
  \int_0^{r_\Omega} \tilde P(t) dt.
\end{equation}
  Indeed, \eqref{eq:2} is obvious if $s< |\Omega|/P(\Omega)$. Otherwise,
  \[
  \int_0^s P(t)\, dt = |\Omega| - |\Omega_s| \le
  \int_0^{{|\Omega|}/{P(\Omega)}} P(\Omega)\, dt = \int_0^s \tilde P(t)\,dt,
  \]
  and we also have \eqref{eq:3}, since $|\Omega_{r_\Omega}|=0$. Then,
  being $f$ increasing, an integration by parts shows that
  \eqref{eq:2} and \eqref{eq:3} imply \eqref{eq:1}.
\end{proof}
\begin{prop}\label{approx}
  Suppose that $\Omega$ verifies \eqref{ipomega}. Then, there exists a
  sequence of functions $\{d_\eps(x)\}_{\eps>0}$, $x\in
  \bar{\Omega}$, such
  that:
  \begin{enumerate}
  \item $d_\eps$ concave in $\Omega$, $d_\eps\equiv 0$ on $\de \Omega$ and
    $d_\eps\in C^2(\Omega)\cap C(\bar\Omega)$;
  \item $0\le d_\eps \le d$, and $d_\eps\rightarrow d$ uniformly in
    $\bar \Omega$;
  \item $|Dd_\eps|\le 1$ in $\Omega$.
  \end{enumerate}
\end{prop}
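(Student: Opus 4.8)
We outline the construction. The starting facts are that, $\Omega$ being convex, the distance $d$ is concave on $\bar\Omega$ and $1$-Lipschitz, and, $\de\Omega$ being $C^2$, that $d$ is in fact $C^2$ on a tubular neighbourhood $\{x\in\bar\Omega\colon d(x)<\rho_0\}$ of $\de\Omega$, its lack of smoothness being confined to the compact set $\{d\ge\rho_0\}\subset\Omega$. So the regularisation need only be carried out in the interior, and the whole difficulty is to do it without losing concavity. The plan is: (i)~mollify; (ii)~glue the mollification back to $d$ itself near $\de\Omega$, so as to keep the boundary datum exactly; (iii)~add a small strictly concave correction to repair concavity; (iv)~rescale slightly to recover the bounds $d_\eps\le d$ and $|Dd_\eps|\le1$.

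For (i) I would work with the \emph{signed} distance $\bar d$, equal to $d$ in $\bar\Omega$ and to $-\dist(\cdot,\Omega)$ outside, which is concave and $1$-Lipschitz on all of $\R^n$ (it is the infimum of the affine maps $x\mapsto h_\Omega(\nu)-\langle x,\nu\rangle$, $\nu\in S^{n-1}$, $h_\Omega$ the support function of $\Omega$). With $\rho_\eps$ a radial mollifier supported in $B_\eps$, set $\bar d_\eps:=\bar d*\rho_\eps\in C^\infty(\R^n)$. Then $\bar d_\eps$ is concave (convolution of a concave function with a nonnegative kernel), $|D\bar d_\eps|\le1$, $\|\bar d_\eps-\bar d\|_{L^\infty(\R^n)}\le\eps$, and Jensen's inequality for the concave $\bar d$ and the mean--zero kernel $\rho_\eps$ gives $\bar d_\eps\le\bar d$ on $\R^n$; moreover $\bar d_\eps\to\bar d$ in $C^2_{\mathrm{loc}}$ on $\{|\bar d|<\rho_0\}$, where $\bar d$ is $C^2$. (Smoothing the infimum over $\nu$ directly, by a ``log--sum--exp'', also yields a $C^\infty$ concave function, but one lying \emph{above} $d$, hence failing property~(2); this is why I prefer the mollification.)

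For (ii), fix a nonincreasing $\theta\in C^\infty(\R)$ with $\theta\equiv1$ on $(-\infty,\rho_0/3]$ and $\theta\equiv0$ on $[2\rho_0/3,\infty)$. Since $\theta'(d)\ne0$ only where $d<\rho_0$, the functions $x\mapsto\theta(d(x))$ and $x\mapsto\theta(d(x))\,d(x)$ lie in $C^2(\bar\Omega)$, and one sets
\[
v_\eps:=\theta(d)\,d+(1-\theta(d))\,\bar d_\eps\qquad\text{in }\bar\Omega .
\]
Then $v_\eps\in C^2(\bar\Omega)$; it equals $d$ on $\{d\le\rho_0/3\}$, so $v_\eps=0$ on $\de\Omega$, and equals $\bar d_\eps$ on $\{d\ge2\rho_0/3\}$. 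Using $\bar d_\eps\le d$ one gets $v_\eps\le d$; using $\bar d_\eps\ge d-\eps\ge0$ on $\{d\ge\rho_0/3\}$ (for $\eps$ small) one gets $v_\eps\ge0$; and $\|v_\eps-d\|_\infty=\|(1-\theta(d))(\bar d_\eps-d)\|_\infty\le\eps$. Finally $|Dv_\eps|\le1+C\eps$, since on the transition layer $Dv_\eps$ is a convex combination of $Dd$ (of modulus $1$ there) and $D\bar d_\eps$ (of modulus $\le1$), plus the term $\theta'(d)(d-\bar d_\eps)Dd$ of modulus $\le\|\theta'\|_\infty\eps$.

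The \textbf{main obstacle} is that $v_\eps$ need not be concave: on the transition layer $D^2v_\eps=\theta(d)D^2d+(1-\theta(d))D^2\bar d_\eps$ plus commutator terms of size $O(\|\bar d_\eps-\bar d\|_{C^2})=o(1)$; the first two terms are $\le0$, but $D^2d$ has a zero eigenvalue along the normal to $\de\Omega$ (the distance is affine along normal geodesics), so there is no room to absorb the $o(1)$ error and one only obtains $D^2v_\eps\le o(1)\,I$. To cure this, fix a $\phi\in C^2(\bar\Omega)$ with $\phi=0$ on $\de\Omega$, $\phi>0$ in $\Omega$, $|D\phi|\le M$ and $D^2\phi\le-c_0I<0$ on $\bar\Omega$; such a strictly concave defining function exists because $\Omega$ is strictly convex with $C^2$ boundary (near $\de\Omega$ one may take $\phi=\bar d-\tfrac c2\bar d^{\,2}$, whose Hessian on $\de\Omega$ equals $D^2\bar d-c\,D\bar d\otimes D\bar d$ and is negative definite for any $c>0$, the rank--one term removing exactly the normal degeneracy of $D^2\bar d$; one then extends $\phi$ to $\bar\Omega$ keeping it concave, which is elementary). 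Choosing $\eta_\eps\to0^+$ so that $\eta_\eps c_0$ dominates the above $o(1)$ Hessian error, the function $v_\eps+\eta_\eps\phi$ is concave and $C^2$ on $\bar\Omega$, vanishes on $\de\Omega$, is nonnegative, converges uniformly to $d$, and satisfies $v_\eps+\eta_\eps\phi\le d+\eta_\eps Md=(1+\eta_\eps M)d$ (using $\phi\le Md$, which follows from $|D\phi|\le M$ and $\phi|_{\de\Omega}=0$) and $|D(v_\eps+\eta_\eps\phi)|\le1+\gamma_\eps$ with $\gamma_\eps:=C\eps+\eta_\eps M\to0$. The required sequence is then
\[
d_\eps:=\frac{1}{1+\gamma_\eps}\,\bigl(v_\eps+\eta_\eps\phi\bigr),
\]
since dividing by $1+\gamma_\eps\ge1$ preserves concavity, nonnegativity and the homogeneous boundary condition, restores $d_\eps\le d$ and $|Dd_\eps|\le1$, and does not spoil the uniform convergence to $d$. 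The only genuinely delicate points are the normal degeneracy of $D^2d$ just described, which forces the correction term $\eta_\eps\phi$, and the (standard) construction of the strictly concave defining function $\phi$.
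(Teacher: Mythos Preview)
Your argument is essentially correct, but it follows a completely different strategy from the paper's. The paper does not mollify at all: it obtains $d_\eps$ as the unique solution of the viscous eikonal problem
\[
\eps\,\Delta w - |Dw|^2 + 1 = 0 \ \text{in }\Omega,\qquad w=0 \ \text{on }\de\Omega,
\]
which, after the Hopf--Cole substitution $z=e^{-w/\eps}-1$, becomes the linear problem $\eps^2\Delta z - z = 1$ with zero boundary data. Existence/uniqueness in $C^2(\Omega)\cap C(\bar\Omega)$ is then classical; the bounds $0\le d_\eps\le d$ and $|Dd_\eps|\le 1$ come from comparison/maximum principles, the uniform convergence from standard vanishing-viscosity theory, and the concavity of $d_\eps$ from Korevaar's concavity maximum principle applied directly to the semilinear equation. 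So the paper trades your hands-on gluing for three black boxes (linear elliptic regularity, viscosity convergence, Korevaar), and gets a much shorter proof.

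What your route buys is that it is entirely elementary---no PDE existence theory, no Korevaar---and it actually produces $d_\eps\in C^2(\bar\Omega)$, a hair stronger than the stated $C^2(\Omega)\cap C(\bar\Omega)$. The one place where your write-up is thinner than it should be is the sentence ``one then extends $\phi$ to $\bar\Omega$ keeping it concave, which is elementary'': you need a global $\phi\in C^2(\bar\Omega)$ with $\phi=0$ on $\de\Omega$, $\phi>0$ in $\Omega$, and $D^2\phi\le -c_0 I$ on \emph{all} of $\bar\Omega$, i.e.\ a strictly concave $C^2$ defining function for the strictly convex domain $\Omega$. This is indeed a standard fact, but your boundary-layer formula $\bar d-\tfrac{c}{2}\bar d^{\,2}$ does not by itself extend to the interior (where $\bar d$ is not $C^2$), so a line explaining the global construction---e.g.\ via the squared Minkowski gauge, or by citing the existence of a uniformly convex $C^2$ defining function---would close the only visible gap.
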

\begin{proof}
  As well-known, the function $d$ is the unique viscosity solution of
  the Dirichlet problem 
  \[
  \left\{
    \begin{array}{ll}
      |D w|^2=1 &\text{in } \Omega,\\
      w=0 &\text{on }\de\Omega.
    \end{array}
  \right.
  \]
  
  Using the standard vanishing viscosity argument, the required
  sequence can be obtained by solving the problems
  \begin{equation}
    \label{apprd}
  \left\{
    \begin{array}{ll}
      \eps \Delta w - |D w|^2+1=0 &\text{in } \Omega,\\
      w=0 &\text{on }\de\Omega.
    \end{array}
  \right.
\end{equation}
The existence and uniqueness of a solution in $C^2(\Omega)\cap
C(\bar\Omega)$ of \eqref{apprd} can be proved by making the change of
variable 
\[
z=\exp\left\{-\frac{w}{\eps} \right\} -1.
\]
Then, $w$ is a solution to \eqref{apprd} if and only if $z\in C^2(\Omega)\cap
  C(\bar\Omega)$ verifies 
 \begin{equation}
    \label{apprd2}
  \left\{
    \begin{array}{ll}
      \eps^2 \Delta z -z= 1 &\text{in } \Omega,\\
      w=0 &\text{on }\de\Omega.
    \end{array}
  \right.
\end{equation}
It is well-known that problem \eqref{apprd2} admits a unique solution
$C^2(\Omega)\cap C(\bar\Omega)$. Hence, the function $w=-\eps
\log(z+1)$ is the unique solution of \eqref{apprd}. For any $\eps>0$,
we choose $d_\eps=w$.

By comparison arguments, it is possible to show that $d_\eps$
satisfies (2) and (3) (see for instance \cite{schie} for the
details). Finally, the concavity of $d_\eps$ follows applying the
Korevaar concavity maximum principle to
\eqref{apprd} (see \cite{kor}).
\end{proof}

\section{Main results}
In this section we state and prove the main results on upper bounds
for the eigenvalue of $\Sk_k$. For ease of reading, we organize the
Section in three different subsections. 

The first aim is to prove an upper bound of $\lambda_k(\Omega)$ by
means of a suitable isoperimetric deficit. To get such estimate we
have to study separately the case $k=n$ and the 
case $1\le k \le n-1$. 
We start by recalling some properties of the eigenfunctions of $\Sk_k$
in a ball.

Let $B_R$ be a ball of $\R^n$ centered at the origin with radius $R$,
and $v\in C^2(B_R)\cap C(\bar B_R)$ be an eigenfunction of the
$k$-Hessian operator in $B_R$.
This means that $v$ verifies
\begin{equation}
  \label{autraddet}
  \left\{
    \begin{array}{ll}
      \Sk_k(D^2v)=\lambda_k(B_R) (-v)^k &\text{in } B_R,\\
      v=0 &\text{on } \de B_R.
    \end{array}
  \right.
\end{equation}
It is known that $v$ is a negative, convex, radially increasing
smooth function. We have that:
\begin{equation}\left\{
    \begin{array}{ll}
      \label{vrad}
v(x)=\varphi(r), & r=|x|,\; x\in B_R,\\
\varphi<0\text{ in }[0,R[,&\varphi(R)=0,\\
\varphi'>0\text{ in }]0,R],&\varphi'(0)=0
\end{array}
\right.
\end{equation}
(see \cite{bt07}, \cite{ga09}).

\subsection{Stability estimates: the case of Monge-Amp\`ere operator}
Let us consider problem \eqref{autraddet} with $k=n$ and
$B_R=\Omega_{n-1}^*$, where  $\Omega_{n-1}^*$ is the ball centered at
the origin and radius $R$ such that
$W_{n-1}(\Omega)=W_{n-1}(\Omega_{n-1}^*)$. Here, $v$ denotes an
eigenfunction relative to $\lambda_n(\Omega_{n-1}^*)$.
Recall that the Faber-Krahn inequality \eqref{fkdet} holds. 

Together with \eqref{fkdet}, the following result gives a quantitative
estimate of $\lambda_n(\Omega)$.
\begin{theo}
  \label{ma}
  Let $\Omega$ be as in \eqref{ipomega}. Then
   \begin{equation}
     \label{quadet}
    \frac{\lambda_n(\Omega) -
      \lambda_n(\Omega^*_{n-1})}{\lambda_n(\Omega)} \le C_\Omega
    \big[|\Omega^*_{n-1}|-|\Omega|\big],
  \end{equation}
  where $C_\Omega=\left(\frac{\| v \|_{\infty}}{\|v\|_{n+1}}\right)^{n+1}$.
\end{theo}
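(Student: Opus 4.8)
The plan is to transplant the radial eigenfunction of $\Omega^*_{n-1}$ to $\Omega$ along the distance function and feed it into the variational characterization \eqref{carvar}. Write $B_R=\Omega^*_{n-1}$, so that by \eqref{querball} and the definition of $\Omega^*_{n-1}$ one has $W_{n-1}(\Omega)=W_{n-1}(B_R)=\omega_nR$; since $B_{r_\Omega}\subseteq\Omega$, monotonicity of the quermassintegrals gives $r_\Omega\le R$, and if $\Omega$ is a ball then $\Omega=\Omega^*_{n-1}$ and \eqref{quadet} is the identity $0=0$, so we may assume $\Omega$ is not a ball and hence $r_\Omega<R$. Let $v(x)=\varphi(|x|)$ be an eigenfunction of $\Sk_n$ in $B_R$ as in \eqref{vrad}; thus $\varphi$ is smooth, increasing and convex on $[0,R]$ with $\varphi(R)=0$. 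For $\eps>0$ let $d_\eps$ be as in Proposition \ref{approx} and set $u_\eps(x)=\varphi\big(R-d_\eps(x)\big)$. Since $0\le d_\eps\le d\le r_\Omega<R$, the map $\tau\mapsto\varphi(R-\tau)$ is smooth, decreasing and convex on the range of $d_\eps$, so, $d_\eps$ being concave, $u_\eps$ is convex and hence $n$-convex; as $u_\eps\in C^2(\Omega)\cap C(\bar\Omega)$ vanishes on $\de\Omega$ we get $u_\eps\in\Phi^2_n(\Omega)$, and \eqref{carvar} gives
\[
\lambda_n(\Omega)\le\frac{N_\eps}{D_\eps},\qquad N_\eps:=\int_\Omega(-u_\eps)\,\Sk_n(D^2u_\eps)\,dx,\quad D_\eps:=\int_\Omega(-u_\eps)^{n+1}\,dx.
\]

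For the numerator I would use the level-set formula \eqref{carvar2} with $k=n$. As $\varphi$ is strictly increasing, $\{u_\eps=-t\}=\{d_\eps=s\}$ with $R-s=\varphi^{-1}(-t)$; this set is the boundary of the convex body $\{d_\eps\ge s\}$, on which $|Du_\eps|=\varphi'(R-s)\,|Dd_\eps|$ with $|Dd_\eps|\le1$. Performing the change of variables $t\mapsto s$ (so that $dt=\varphi'(R-s)\,ds$) and using that the total curvature of the boundary of a convex body equals $n\omega_n$, i.e.\ $\int_{\{d_\eps=s\}}H_{n-1}\,d\mathcal H^{n-1}=n\omega_n$ for a.e.\ $s$, one obtains
\[
N_\eps\le n\omega_n\int_0^{\|d_\eps\|_\infty}\varphi'(R-s)^{n+1}\,ds\le n\omega_n\int_0^R\varphi'(\rho)^{n+1}\,d\rho .
\]
Carrying out the same computation for $v$ on $B_R$ — whose level sets are spheres and where the distance to $\de B_R$ has unit gradient — identifies the last integral with $\int_{B_R}(-v)\,\Sk_n(D^2v)\,dx$, which, since $v$ solves \eqref{autraddet}, equals $\lambda_n(\Omega^*_{n-1})\,\|v\|_{n+1}^{n+1}$. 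Hence $N_\eps\le\lambda_n(\Omega^*_{n-1})\,\|v\|_{n+1}^{n+1}$ for every $\eps>0$.

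By Proposition \ref{approx}, $u_\eps\to u:=\varphi(R-d)$ uniformly on $\bar\Omega$, so $D_\eps\to D:=\int_\Omega(-u)^{n+1}\,dx>0$ and therefore $\lambda_n(\Omega)\le\lambda_n(\Omega^*_{n-1})\,\|v\|_{n+1}^{n+1}/D$. It remains to bound $D$ from below. By the layer-cake formula, with $\mu_v(t):=|\{-v>t\}|=\omega_n(\varphi^{-1}(-t))^n$ and $\mu_u(t):=|\{-u>t\}|=|\Omega_{R-\varphi^{-1}(-t)}|$, and since $\|u\|_\infty\le\|v\|_\infty$ (because $d\le R$),
\[
\|v\|_{n+1}^{n+1}-D=\int_0^{\|v\|_\infty}(n+1)\,t^n\big(\mu_v(t)-\mu_u(t)\big)\,dt .
\]
The crucial point is the pointwise estimate $\mu_v(t)-\mu_u(t)\le|\Omega^*_{n-1}|-|\Omega|$, which, writing $s=R-\varphi^{-1}(-t)\in[0,R)$, amounts to $|\{d\le s\}|\le\omega_n\big(R^n-(R-s)^n\big)$. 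To prove it, Lemma \ref{derquer} with $p=n-1$ together with $W_n\equiv\omega_n$ gives $W_{n-1}(\Omega_\tau)\le W_{n-1}(\Omega)-\omega_n\tau=\omega_n(R-\tau)$, whence the Aleksandrov--Fenchel inequality \eqref{afineq} with $i=1$, $j=n-1$ yields $P(\Omega_\tau)=nW_1(\Omega_\tau)\le n\omega_n(R-\tau)^{n-1}$; integrating $-\tfrac{d}{d\tau}|\Omega_\tau|=P(\Omega_\tau)$, see \eqref{dermis}, over $[0,s]$ gives $|\{d\le s\}|=\int_0^sP(\Omega_\tau)\,d\tau\le\omega_n\big(R^n-(R-s)^n\big)$. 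Consequently $\|v\|_{n+1}^{n+1}-D\le(|\Omega^*_{n-1}|-|\Omega|)\int_0^{\|v\|_\infty}(n+1)t^n\,dt=(|\Omega^*_{n-1}|-|\Omega|)\,\|v\|_\infty^{n+1}$.

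Combining the two estimates gives $\lambda_n(\Omega)\big(\|v\|_{n+1}^{n+1}-(|\Omega^*_{n-1}|-|\Omega|)\|v\|_\infty^{n+1}\big)\le\lambda_n(\Omega^*_{n-1})\|v\|_{n+1}^{n+1}$; dividing by $\|v\|_{n+1}^{n+1}$ and rearranging — the case in which the left-hand factor is nonpositive being trivial, since then the left side of \eqref{quadet} is $<1\le C_\Omega(|\Omega^*_{n-1}|-|\Omega|)$ — yields exactly \eqref{quadet} with $C_\Omega=(\|v\|_\infty/\|v\|_{n+1})^{n+1}$. I expect the main work to be, besides checking that $u_\eps$ is an admissible test function, the denominator estimate: the reduction of the $L^{n+1}$-deficit $\|v\|_{n+1}^{n+1}-D$ to the perimeter bound $P(\Omega_\tau)\le n\omega_n(R-\tau)^{n-1}$, where Lemma \ref{derquer} and the Aleksandrov--Fenchel inequalities come into play.
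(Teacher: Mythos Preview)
Your proof is correct and follows essentially the same route as the paper's: transplant the radial eigenfunction via $u_\eps=\varphi(R-d_\eps)$, bound the numerator using the total-curvature identity $\int_{\{u_\eps=-t\}}H_{n-1}\,d\mathcal H^{n-1}=n\omega_n$, and control the $L^{n+1}$-deficit of the denominator through $W_{n-1}(\Omega_\tau)\le\omega_n(R-\tau)$ followed by Aleksandrov--Fenchel to get $P(\Omega_\tau)\le n\omega_n(R-\tau)^{n-1}$. The only cosmetic difference is that you run the denominator comparison in the distance variable $\tau$ while the paper works in the value variable $t$ (phrasing it as ``$\nu-\mu$ is decreasing''); after the change of variables $t=-\varphi(R-\tau)$ the two arguments are identical.
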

\begin{proof}
  Without loss of generality,
 we can suppose that the quantity in the right-hand side of
 \eqref{quadet} is smaller than $1$. Otherwise, \eqref{quadet} is
 trivial.  Let $R>0$ such that $B_R=\Omega^*_{n-1}$, and define 
  \[
  u_\eps(x)= \varphi(R-d_\eps(x)),\quad x\in\Omega,
  \]
  where $\varphi$ is given in \eqref{vrad} and $d_\eps$ is
  the approximation of the distance function to the boundary of
  $\Omega$ given in Proposition \ref{approx}. For any $\eps>0$, the
  function $u_\eps$ is well defined, being $d_\eps
  \le d \le r_\Omega$, and $r_\Omega\le R$. Last inequality is true since, by
  the definition of $W_{n-1}$ and using the Aleksandrov-Fenchel
  inequality for $j=n-1$ and $i=0$, we have 
  \[
  \omega_nR = W_{n-1}(\Omega^*_{n-1}) =W_{n-1}(\Omega) \ge
  \omega_n^{1-\frac 1 n} |\Omega|^{\frac 1 n} \ge \omega_n r_\Omega. 
  \]
As matter of fact, denoting the function $g(t)=|Dv|_{\{v=-t\}}$, $0\le
t \le \|v\|_{\infty}$, by construction, the function $u_\eps$ has the
following properties: 
\begin{equation}\left\{
    \begin{array}{l}
      \label{ueps}
u_\eps(x)\in \Phi_n^2(\Omega),\\
|Du_\eps|_{\{u_\eps=-t\}} \le g(t), \\
\|u_\eps\|_{\infty}\le \|v\|_{\infty},\\
u_\eps(x)\rightarrow u(x)=\varphi(R-d(x))\text{ uniformly in }\bar\Omega.
\end{array}
\right.
\end{equation}
Let us define
\[
  E_t=\{x\in\Omega\colon u<-t\},\quad B_t=\{ x\in \Omega^*_{n-1}
  \colon v<-t\}.
  \]
  $E_t$ is a convex set, while $B_t$ is a ball centered at the
  origin. 

  Lemma \ref{derquer2} implies that
  \[
  -\frac {d}{dt} W_{n-1}(E_t) \ge \frac{\omega_n}{g(t)}=  -\frac
  {d}{dt} W_{n-1}(B_t).
  \]
  Together with the initial condition $W_{n-1}(E_0)=W_{n-1}(B_0)$, we
  have that
  \[
  W_{n-1}(E_t) \le W_{n-1} (B_t), \quad 0<t<\sup(-v).
  \]
  Applying the Aleksandrov - Fenchel inequalities, the above
  inequality gives that
  \begin{equation}\label{perin2}
  P(E_t) \le P(B_t).
  \end{equation}
   Now, denote with $\mu(t)=|E_t|$ and $\nu(t)=|B_t|$. Using the coarea
 formula and the inequality \eqref{perin2}, we have that
  \begin{equation*}
  -\mu'(t)=\int_{\{u=-t\}} \frac{1}{|Du|}d\mathcal H^{n-1} =
  \frac{P(E_t)}{g(t)} \le \frac{P(B_t)}{g(t)}
  =\int_{\{v=-t\}} \frac{1}{|Dv|}d\mathcal H^{n-1} =-\nu'(t),
\end{equation*}
and then $\nu-\mu$ is a decreasing function. Hence,
\begin{multline}
  \label{stimedendet}
\int_{\Omega} (-u)^{n+1} dx =\int_0^{\|u\|_{\infty}} (n+1) t^n \mu(t)
dt =\\ = \int_0^{\|v\|_{\infty}} (n+1) t^n \nu(t) dt
-\int_0^{\|v\|_{\infty}} (n+1) t^n \left[\nu(t) - \mu(t) \right] dt
\ge \\ \ge \int_{\Omega^*_{n-1}} v^p dx - \left(
  |\Omega^*_{n-1}|-|\Omega|\right)\| v \|_{\infty}^{n+1}.
\end{multline}
Then, from the uniform convergence
of $u_\eps$ to $u$ and \eqref{stimedendet} we get that
\begin{equation}
  \label{denom}
  \lim_{\eps\rightarrow 0^+} \int_\Omega (-u_\eps)^{n+1} dx = \int_\Omega (-u)^{n+1} dx
\ge\int_{\Omega_{n-1}^*} (-v)^{n+1} dx - \left(
  |\Omega^*_{n-1}|-|\Omega|\right)\| v \|_{\infty}^{n+1}.
\end{equation}
On the other hand, \eqref{carvar2} and \eqref{ueps} imply that
  \begin{multline*}
  \int_\Omega (-u_\eps)\det(D^2 u_\eps) dx =
  \int_0^{\|u_\eps\|_\infty} dt\int_{\{u_\eps=-t\}} H_{n-1}
  (\{u_\eps=-t\}) |D u_\eps|^n d\mathcal H^{n-1} \le \\ \le
  \int_0^{\|u_\eps\|_\infty} g(t) dt  \int_{\{u_\eps=-t\}} H_{n-1}
  (\{u_\eps=-t\}) d\mathcal H^{n-1} =\\ =  n \omega_n \int_0^{\|u_\eps\|_\infty}
  g(t) dt  \le  n \omega_n \int_0^{\|v\|_\infty}
  g(t) dt =\\ =\int_0^{\|v\|_\infty} dt\int_{\{v=-t\}} H_{n-1}
  (\{v=-t\}) |D v|^n d\mathcal H^{n-1} =\int_{\Omega^*_{n-1}} (-v)\det(D^2 v) dx.
\end{multline*}

Finally, putting together \eqref{denom} and the above inequality, we
have that
\begin{multline}
\lambda_n(\Omega) \le \liminf_{\eps\rightarrow 0^*} \frac{  \int_\Omega
  (-u_\eps)\det(D^2 u_\eps) dx }{\int_\Omega (-u_\eps)^{n+1} dx}
\le \frac{\int_{\Omega^*_{n-1}} (-v)\det(D^2 v)
  dx}{\int_{\Omega^*_{n-1}} (-v)^{n+1}dx -\left(
  |\Omega^*_{n-1}|-|\Omega|\right)\| v \|_{\infty}^{n+1}} =
\\ = \frac{\lambda_n(\Omega^*_{n-1})}{ 1 -
  (|\Omega^*_{n-1}|-|\Omega|)\left(\frac{\| v
      \|_{\infty}}{\|v\|_{n+1}}\right)^{n+1}},
\end{multline}
and then 
\[
 \frac{\lambda_n(\Omega) - \lambda_n(\Omega^*_{n-1})}{\lambda_n(\Omega)} \le
\left(|\Omega^*_{n-1}|-{|\Omega|} \right) \left(\frac{\| v
    \|_{\infty}}{\|v\|_{n+1}}\right)^{n+1}.
\]
\end{proof}
\begin{rem}
  If $n=2$, the estimate \eqref{quadet} becomes
  \[
  \frac{\lambda_2(\Omega) -
    \lambda_2(\Omega^\star)}{\lambda_2(\Omega)} \le 
  \frac{C_\Omega}{4\pi}\left(P^2(\Omega)- 
   4\pi |\Omega| \right), 
  \]
  where $\Omega^\star=\Omega^*_1$ is the ball with the same perimeter
  than $\Omega$.
\end{rem}
\subsection{Stability estimates: the case of $k$-Hessian operator,
  $k<n$}
Now we consider problem \eqref{autraddet} with $1\le k\le n-1$ and
$B_R=\Omega_{k}^*$, where  $\Omega_{k}^*$ is the ball centered at
the origin and radius $R$ such that
$W_{k}(\Omega)=W_{k}(\Omega_{k}^*)$. As before, $v$ denotes an
eigenfunction relative to $\lambda_k(\Omega_k^*)$.
\begin{theo}
  \label{estsk}
  Let $\Omega$ be as in \eqref{ipomega}, and $1\le k\le n-1$. Then
  \begin{equation}
    \label{tesi}
    \frac{\lambda_k(\Omega) -
      \lambda_k(\Omega^*_{k})}{\lambda_k(\Omega)} \le C_\Omega
    \left(|\Omega^*_k|-|\Omega| \right),
  \end{equation}
  where $C_\Omega= \left(\frac{\| v
      \|_{\infty}}{\|v\|_{k+1}}\right)^{k+1}$.
\end{theo}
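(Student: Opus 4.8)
The plan is to adapt the argument of Theorem~\ref{ma}, replacing the quermassintegral $W_{n-1}$ by $W_k$ and the ball $\Omega^*_{n-1}$ by $\Omega^*_k$. We may assume $C_\Omega(|\Omega^*_k|-|\Omega|)<1$, since otherwise \eqref{tesi} is trivial. Let $R>0$ be the radius of $\Omega^*_k$, so that $W_k(\Omega)=W_k(\Omega^*_k)=\omega_nR^{n-k}$; since $B_{r_\Omega}\subseteq\Omega$ and $W_k$ is monotone with respect to inclusion, $\omega_nr_\Omega^{n-k}\le W_k(\Omega)=\omega_nR^{n-k}$, whence $r_\Omega\le R$. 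With $\varphi$ as in \eqref{vrad} (the radial profile of the eigenfunction $v$ of $\Sk_k$ in $\Omega^*_k$) and $d_\eps$ as in Proposition~\ref{approx}, I would set
\[
u_\eps(x)=\varphi(R-d_\eps(x)),\qquad u(x)=\varphi(R-d(x)),\qquad x\in\bar\Omega,
\]
which is well defined because $0\le d_\eps\le d\le r_\Omega\le R$. Since $\varphi$ is increasing and convex and $d_\eps$ is concave, $u_\eps$ is convex, hence $u_\eps\in\Phi^2_k(\Omega)$; moreover, writing $g(t)=|Dv|_{\{v=-t\}}$, one checks as in \eqref{ueps} that $|Du_\eps|_{\{u_\eps=-t\}}\le g(t)$ (using $|Dd_\eps|\le1$), that $\|u_\eps\|_\infty\le\|v\|_\infty$ (using $d_\eps\le R$), and that $u_\eps\to u$ uniformly in $\bar\Omega$.

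The heart of the proof is a level-set comparison. Set $E_t=\{u<-t\}=\Omega_{s(t)}$ with $s(t)=R-\varphi^{-1}(-t)$, and $E_t^\eps=\{u_\eps<-t\}=\{d_\eps>s(t)\}$; then $E_t^\eps\subseteq E_t$ because $d_\eps\le d$, and both sets are convex. Let $B_t=\{v<-t\}\subset\Omega^*_k$, the ball of radius $\rho(t)=\varphi^{-1}(-t)$, so that $W_k(B_t)=\omega_n\rho(t)^{n-k}$ and, from $\varphi'(\rho(t))\rho'(t)=-1$, $-\frac{d}{dt}\big(W_k(B_t)/\omega_n\big)^{1/(n-k)}=-\rho'(t)=1/g(t)$. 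Applying Lemma~\ref{derquer2} to $-u=f(d)$ with $f(s)=-\varphi(R-s)$ (increasing, $f(0)=0$), using $|Du|=g(t)$ a.e.\ on $\{u=-t\}$ and the Aleksandrov--Fenchel inequality \eqref{afineq} for $i=k$, $j=k+1$ (when $k\le n-2$; for $k=n-1$ one uses instead $W_n\equiv\omega_n$, as in the proof of Theorem~\ref{ma}), I obtain
\[
-\frac{d}{dt}\Big(\frac{W_k(E_t)}{\omega_n}\Big)^{\frac1{n-k}}\ge\frac1{g(t)}\qquad\text{for a.e.\ }t .
\]
Together with the initial condition $W_k(E_0)=W_k(\Omega)=W_k(\Omega^*_k)=W_k(B_0)$, this gives $W_k(E_t)\le W_k(B_t)$, hence also $W_k(E_t^\eps)\le W_k(E_t)\le W_k(B_t)$; a further use of \eqref{afineq} with $i=1$, $j=k$ (trivial when $k=1$) yields $P(E_t)\le P(B_t)$.

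From here one estimates the two sides of the Rayleigh quotient exactly as in Theorem~\ref{ma}. For the denominator, with $\mu(t)=|E_t|$ and $\nu(t)=|B_t|$, the coarea formula and $P(E_t)\le P(B_t)$ give $-\mu'\le-\nu'$, so $\nu-\mu$ is nonincreasing and $0\le\nu(t)-\mu(t)\le\nu(0)-\mu(0)=|\Omega^*_k|-|\Omega|$; since $\int_\Omega(-u)^{k+1}dx=(k+1)\int_0^{\|u\|_\infty}t^k\mu(t)\,dt$ and likewise for $v$, this yields
\[
\int_\Omega(-u)^{k+1}dx\ge\int_{\Omega^*_k}(-v)^{k+1}dx-\big(|\Omega^*_k|-|\Omega|\big)\|v\|_\infty^{k+1},
\]
and by uniform convergence $\int_\Omega(-u_\eps)^{k+1}dx$ converges to the left-hand side. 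For the numerator, \eqref{carvar2} applied to $u_\eps$, the bound $|Du_\eps|\le g(t)$, the identity $\int_{\{u_\eps=-t\}}H_{k-1}\,d\mathcal H^{n-1}=nW_k(E_t^\eps)$ (for a.e.\ $t$, by Sard's theorem), and $W_k(E_t^\eps)\le W_k(B_t)$, $\|u_\eps\|_\infty\le\|v\|_\infty$ give
\[
\int_\Omega(-u_\eps)\,\Sk_k(D^2u_\eps)\,dx\le n\int_0^{\|v\|_\infty}g(t)^kW_k(B_t)\,dt=\int_{\Omega^*_k}(-v)\,\Sk_k(D^2v)\,dx .
\]
Plugging $u_\eps$ into \eqref{carvar} and letting $\eps\to0^+$ then gives
\[
\lambda_k(\Omega)\le\frac{\int_{\Omega^*_k}(-v)\,\Sk_k(D^2v)\,dx}{\int_{\Omega^*_k}(-v)^{k+1}dx-(|\Omega^*_k|-|\Omega|)\|v\|_\infty^{k+1}}=\frac{\lambda_k(\Omega^*_k)}{1-(|\Omega^*_k|-|\Omega|)C_\Omega},
\]
with $C_\Omega=(\|v\|_\infty/\|v\|_{k+1})^{k+1}$, and \eqref{tesi} follows by rearranging (the denominator is positive by the reduction made at the outset).

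I expect the main obstacle to be the comparison $W_k(E_t)\le W_k(B_t)$: unlike in Theorem~\ref{ma}, where $W_n(E_t)\equiv\omega_n$ and the differential inequality coming from Lemma~\ref{derquer2} is already closed, here $W_{k+1}(E_t)$ varies, so one must invoke the Aleksandrov--Fenchel inequality for the indices $k$, $k+1$ to turn it into a closed inequality for $W_k(E_t)^{1/(n-k)}$ (and, separately, the Aleksandrov--Fenchel inequality for $1$, $k$ to recover the perimeter bound needed for the denominator). A secondary technical point is ensuring that the level sets $\{u_\eps=-t\}$ are regular enough to apply \eqref{carvar2} and the curvature-integral representation of $W_k$, which is handled for a.e.\ $t$ via Sard's theorem together with the convexity of $E_t^\eps$.
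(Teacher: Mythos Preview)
Your proposal is correct and follows essentially the same route as the paper's own proof: construct the test functions $u_\eps=\varphi(R-d_\eps)$, compare $W_k(E_t)$ with $W_k(B_t)$ via Lemma~\ref{derquer2} combined with the Aleksandrov--Fenchel inequality for the pair $(k,k+1)$ (with the case $k=n-1$ handled separately using $W_n\equiv\omega_n$), deduce $P(E_t)\le P(B_t)$ by a second application of \eqref{afineq}, and then estimate the numerator and denominator of the Rayleigh quotient exactly as in Theorem~\ref{ma}. Your presentation is in fact slightly cleaner in two places: you recast the differential inequality directly as $-\frac{d}{dt}(W_k(E_t)/\omega_n)^{1/(n-k)}\ge 1/g(t)$, which makes the comparison with $\rho(t)$ immediate, and you write the correct power $g(t)^k$ in the numerator estimate (the paper's displayed chain has a typographical slip, writing $g(t)$ in place of $g(t)^k$).
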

\begin{proof}
  We follow the lines of the proof of Theorem \ref{ma}. First, suppose
  that the quantity in the right-hand side of \eqref{tesi} is smaller
  than 1. Let $R>0$ be such that $B_R=\Omega^*_{k}$, and 
  \[
  u_\eps(x)=\varphi(R-d_\eps(x)),\quad x\in \Omega.
  \]
 The function $u_\eps$ is well defined, since by Aleksandrov-Fenchel
 inequalities we have
 \[
   \omega_nR^{n-k} = W_{k}(\Omega^*_{k}) =W_{k}(\Omega) \ge
  \omega_n^{1-\frac {n-k}{n}} |\Omega|^{\frac {n-k}{n}} \ge \omega_n
  r_\Omega^{n-k}. 
  \]
  By construction, $u_\eps$ has the
  following properties:
  \begin{equation*}\left\{
      \begin{array}{l}
        u_\eps(x)\in \Phi_k^2(\Omega),\\
        |Du_\eps|_{\{u_\eps=-t\}} \le g(t):=|Dv|_{v=-t}, \\
        \|u_\eps\|_{\infty}\le \|v\|_{\infty},\\
        u_\eps(x)\rightarrow u(x)=\varphi(R-d(x))\text{ uniformly in
        }\bar\Omega. 
      \end{array}
      \right.
  \end{equation*}
  For $t\ge 0$, we set
  \[
  E_t=\{x\in\Omega\colon -u> t\},\quad B_t=\{ x\in \Omega^*_{k}
  \colon -v>t\}.
  \]
  $E_t$ is a convex set, while $B_t$ is a ball centered at the origin.

  By Lemma \ref{derquer2} and the Aleksandrov-Fenchel inequalities, if
  $1\le k <n-1$,
  we have
  \begin{align*}
    -\frac {d}{dt} W_{k}(E_t) &\ge (n-k) \frac{W_{k+1} (E_t)}{g(t)} \\
    &\ge
  (n-k)\omega_n^{\frac{1}{n-k-1}} \frac{W_{k}(E_t)^{\frac{n-k-1}{n-k}}}{g(t)},
  \end{align*}
  and
  \begin{align*}
    -\frac {d}{dt} W_{k}(B_t) &= (n-k) \frac{W_{k+1} (B_t)}{g(t)}\\
    &=(n-k+1)\omega_n^{\frac{1}{n-k}}  \frac{W_{k}(B_t)^{\frac{n-k}{n-k}}}{g(t)}.
  \end{align*}

  If $k=n-1$, being $W_n(K)=\omega_n$, we write simply that
  \[
  -\frac {d}{dt} W_{n-1}(E_t) \ge \frac{\omega_n}{g(t)},
  \]
  and
  \[
  -\frac {d}{dt} W_{n-1}(B_t) = \frac{\omega_n}{g(t)}.
  \]

  Being $W_{k}(E_0)=W_{k}(B_0)$, by the classical comparison
  theorems for differential inequalities, we get that
  \begin{equation}\label{wkin}
    W_{k}(E_t) \le W_{k}(B_t),\quad 0< t < \|v\|_{\infty}.
  \end{equation}
  The inequality \eqref{wkin} implies that
  \begin{equation*}
    P(E_t) \le P(B_t).
  \end{equation*}
  Indeed, this is trivial if $k=1$. In the case $2\le k \le
  n-1$, this follows using the Aleksandrov-Fenchel inequalities
  \eqref{af-2} in \eqref{wkin}, and recalling that \eqref{af-2} holds
  as an equality for the sets $B_t$.

 Now, reasoning similarly as in the proof of Theorem \eqref{ma}, 
it follows that
\[
\lim_{\eps\rightarrow 0^+} \int_\Omega (-u_\eps)^{k+1} dx =
\int_\Omega (-u)^{k+1} dx \ge\int_{\Omega_k^*} (-v)^{k+1} dx -\left(
  |\Omega^*_{k}|-|\Omega|\right)\| v \|_{\infty}^{k}.
\]
Moreover, recalling the properties of $u_\eps$ and observing that the
level set $E_k^\eps= \{u_\eps < -t \}$ are contained in
$E_t=\{u<-t\}$, by \eqref{carvar2} we get that
\begin{multline}
  \label{gap}
  \int_\Omega (-u_\eps)\Sk_k(D^2 u_\eps) dx =
  \int_0^{\|u_\eps\|_\infty} dt\int_{\{u_\eps=-t\}} H_{k-1}
  (\{u_\eps=-t\}) |D u_\eps|^k d\mathcal H^{n-1} \le \\ \le
  \int_0^{\|v\|_\infty} g(t) dt  \int_{\{u_\eps=-t\}} H_{k-1}
  (\{u_\eps=-t\}) d\mathcal H^{n-1} =\\ =  n \int_0^{\|v\|_\infty}
  g(t) W_k(E^\eps_t) dt  \le  n \int_0^{\|v\|_\infty}
  g(t) W_k(E_t) dt \le  n \int_0^{\|u\|_\infty}
  g(t) W_k(B_t) dt =\\ =\int_0^{\|v\|_\infty} dt\int_{\{v=-t\}} H_{k-1}
  (\{v=-t\}) |D v|^k d\mathcal H^{n-1} =\int_{\Omega^*_k} (-v)\Sk_k(D^2 v) dx.
\end{multline}

Finally,
\begin{multline*}
\lambda_k(\Omega) \le \liminf_{\eps\rightarrow 0^+} \frac{  \int_\Omega
  (-u_\eps)\Sk_k(D^2 u_\eps) dx }{\int_\Omega (-u_\eps)^{k+1} dx}
\le \frac{\int_{\Omega^*_{k}} (-v)\Sk_k(D^2 v)
  dx}{\int_{\Omega^*_{k}} (-v)^{k+1}dx -\left(
  |\Omega^*_{k}|-|\Omega|\right)\| v \|_{\infty}^{k}} =
\\ = \frac{\lambda_k(\Omega^*_{k})}{ 1 -
  (|\Omega^*_{k}|-|\Omega|)\left(\frac{\| v
      \|_{\infty}}{\|v\|_{k+1}}\right)^{k+1}},
\end{multline*}
and we can conclude that
\begin{equation*}
\frac{\lambda_k(\Omega) - \lambda_k(\Omega^*_{k})}{\lambda_k(\Omega)} \le
\left(|\Omega^*_{k}|-|\Omega|\right) \left(\frac{\| v
    \|_{\infty}}{\|v\|_{k+1}}\right)^{k+1}.
\end{equation*}
\end{proof}
\begin{rem}
  We observe that if we choose $\Omega$ in the class of sets such that
  the Faber-Krahn inequality 
  \[
  \lambda_k(\Omega)\ge \lambda_k(\Omega^*_{k-1}) 
  \]
  holds (see Section 1.2), then \eqref{estsk} gives a quantitative
  estimate for $\lambda_k$ in terms of an isoperimetric
  deficit. Indeed, in such a case, being $\lambda_k(\cdot)$ decreasing
  with respect to the inclusion of sets, we have
  \[
   0\le \frac{\lambda_k(\Omega) -
     \lambda_k(\Omega^*_{k-1})}{\lambda(\Omega)}\le
   \frac{\lambda_k(\Omega) - 
    \lambda_k(\Omega^*_{k})}{\lambda_k(\Omega)} \le C_\Omega
  \left(|\Omega^*_k|- |\Omega| \right).
   \]
 \end{rem}

In the last subsection we give an estimate of $\lambda_k(\Omega)$ that
generalizes the one obtained by Makai in \cite{mak} for the first
eigenvalue of the Laplacian.
\subsection{An upper bound for the eigenvalue of $\Sk_k$, $1\le k \le
  n$}
\begin{theo}
Let $\Omega$ verifies \eqref{ipomega}, and let $\lambda_k(\Omega)$ be
the eigenvalue of the $k$-Hessian operator $\Sk_k$ in $\Omega$, with $1\le
k \le n$. Then the following upper bound for $\lambda_k(\Omega)$
holds:  
\begin{equation}
  \label{polyaut}
  \lambda_k(\Omega) \le \frac{n(k+2)}{n-k +1}
  \frac{P(\Omega)^{k+1}}{|\Omega|^{k+2}} W_{k-1}(\Omega).
\end{equation}
\end{theo}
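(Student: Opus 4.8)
The plan is to estimate $\lambda_k(\Omega)$ from above by using in the variational characterization \eqref{carvar} a web function built on the distance $d(x)=\dist(x,\de\Omega)$. Since $-d$ is only Lipschitz, I would instead use the smooth concave approximations $d_\eps$ of Proposition \ref{approx} and set $u_\eps:=-d_\eps$. By part (1) of that proposition, $u_\eps$ is convex, $u_\eps\in C^2(\Omega)\cap C(\bar\Omega)$ and $u_\eps=0$ on $\de\Omega$; hence $D^2u_\eps\ge 0$, so $\Sk_j(D^2u_\eps)\ge 0$ for all $j$ and $u_\eps\in\Phi^2_k(\Omega)$. Thus \eqref{carvar} gives, for every $\eps>0$,
\[
\lambda_k(\Omega)\le\frac{\int_\Omega(-u_\eps)\Sk_k(D^2u_\eps)\,dx}{\int_\Omega(-u_\eps)^{k+1}\,dx}=\frac{\int_\Omega d_\eps\,\Sk_k(D^2u_\eps)\,dx}{\int_\Omega d_\eps^{k+1}\,dx}.
\]

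For the numerator I would apply the coarea representation \eqref{carvar2}. Using $|Du_\eps|=|Dd_\eps|\le 1$ (part (3)) and $H_{k-1}\ge 0$ on the convex level sets,
\[
\int_\Omega d_\eps\,\Sk_k(D^2u_\eps)\,dx\le\int_0^{\|d_\eps\|_\infty}\Big(\int_{\{d_\eps=t\}}H_{k-1}(\{d_\eps=t\})\,d\mathcal H^{n-1}\Big)\,dt.
\]
Since $d_\eps$ is concave, for a.e.\ $t$ — after discarding the critical values of $d_\eps$ via Sard's theorem — the set $\{d_\eps>t\}$ is a convex body with $C^2$ boundary $\{d_\eps=t\}$ and the inner integral equals $nW_k(\{d_\eps>t\})$. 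As $d_\eps\le d$ we have $\{d_\eps>t\}\subseteq\Omega_t$, and as $\|d_\eps\|_\infty\le\|d\|_\infty=r_\Omega$, monotonicity of the quermassintegrals yields
\[
\int_\Omega d_\eps\,\Sk_k(D^2u_\eps)\,dx\le n\int_0^{r_\Omega}W_k(\Omega_t)\,dt.
\]
Now Lemma \ref{derquer} with $p=k-1$ gives $W_k(\Omega_t)\le\frac{1}{n-k+1}\big(-\frac{d}{dt}W_{k-1}(\Omega_t)\big)$, and integrating over $(0,r_\Omega)$ — using that $t\mapsto W_{k-1}(\Omega_t)$ is absolutely continuous and decreasing and that $W_{k-1}\ge 0$ — I obtain $\int_0^{r_\Omega}W_k(\Omega_t)\,dt\le\frac{1}{n-k+1}W_{k-1}(\Omega)$. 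Hence the numerator is bounded, \emph{uniformly in $\eps$}, by $\frac{n}{n-k+1}W_{k-1}(\Omega)$.

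For the denominator, part (2) gives $d_\eps\to d$ uniformly on $\bar\Omega$, so $\int_\Omega d_\eps^{k+1}\,dx\to\int_\Omega d^{k+1}\,dx$. By the coarea formula and $|Dd|=1$ a.e.,
\[
\int_\Omega d^{k+1}\,dx=\int_0^{r_\Omega}t^{k+1}P(\Omega_t)\,dt,
\]
and applying Lemma \ref{lemd} to the nondecreasing $C^1$ function $f(t)=t^{k+1}$,
\[
\int_\Omega d^{k+1}\,dx\ge P(\Omega)\int_0^{|\Omega|/P(\Omega)}t^{k+1}\,dt=\frac{|\Omega|^{k+2}}{(k+2)\,P(\Omega)^{k+1}}.
\]
Combining the numerator bound with this lower bound and letting $\eps\to 0^+$ yields precisely \eqref{polyaut}.

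The routine part is the chain of quermassintegral manipulations; the point needing real care is the numerator. One must justify \eqref{carvar2} for the function $u_\eps$, which is only $C^2(\Omega)\cap C(\bar\Omega)$ and $k$-convex, together with the a.e.-in-$t$ identity $\int_{\{d_\eps=t\}}H_{k-1}\,d\mathcal H^{n-1}=nW_k(\{d_\eps>t\})$ (Sard plus convexity of the superlevel sets). Conceptually, the key observation is that although the Hessian measure $\Sk_k(D^2u_\eps)\,dx$ concentrates on the ridge set (medial axis) of $\Omega$ as $\eps\to 0$, the bound $\frac{n}{n-k+1}W_{k-1}(\Omega)$ on its $(-u_\eps)$-weighted mass is uniform in $\eps$ — which is exactly what makes the limiting argument work, and is needed in particular when $k=n$, where $\Sk_n(D^2(-d))=0$ pointwise away from the ridge, so that the naive test function $-d$ would produce a useless numerator.
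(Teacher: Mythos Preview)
Your proof is correct and follows essentially the same approach as the paper: test with $u_\eps=-d_\eps$, bound the numerator via \eqref{carvar2}, $|Dd_\eps|\le 1$, the identity $\int_{\{d_\eps=t\}}H_{k-1}\,d\mathcal H^{n-1}=nW_k(\{d_\eps>t\})$, monotonicity of $W_k$ and Lemma~\ref{derquer}, and bound the denominator via the coarea formula and Lemma~\ref{lemd} with $f(t)=t^{k+1}$. Your additional remarks on Sard's theorem and on why the uniform numerator bound is what makes the limiting argument work are helpful clarifications, but the skeleton of the argument is the paper's own.
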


\begin{proof}
  Let $d_\eps$ the sequence given in Proposition \ref{approx}. Recall
  that $r_\Omega$ is the inradius of $\Omega$.
  By \eqref{carvar2} and Lemma \ref{derquer} we have that
  \begin{multline}\label{contodist1}
    \int_\Omega (d_\eps)\Sk_k(D^2 d_\eps) dx =
    \int_0^{\|d_\eps\|_\infty} dt\int_{\{d_\eps=t\}} H_{k-1}
    (\{d_\eps=t\}) |D u_\eps|^k d\mathcal H^{n-1} \le \\ \le
    \int_0^{r_\Omega}dt  \int_{\{d_\eps=t\}} H_{k-1}
    (\{d_\eps=t\}) d\mathcal H^{n-1} = n\int_0^{r_\Omega}
   W_{k}(\{d_\eps>t\}) dt \le \\ \le n\int_0^{r_\Omega}
   W_{k}(\Omega_t) dt \le \frac{n}{n-k+1} \int_0^{r_\Omega}
   -\frac{d}{dt} W_{k-1}(\Omega_t) dt= \frac{n}{n-k+1} W_{k-1}(\Omega),
 \end{multline}
 while, using the coarea formula and Lemma \ref{lemd} with
 $f(t)=t^{k+1}$, we have that
 \begin{equation}
   \label{contodist2}
   \int_\Omega d^{k+1} dx = \int_0^{r_\Omega} t^{k+1} P(\Omega_t) dt \ge
   P(\Omega) \int_0^{|\Omega|/P(\Omega)} t^{k+1} dt = \frac{1}{(k+2)}
   \frac{|\Omega|^{k+2}}{P(\Omega)^{k+1}}.
 \end{equation}
 Hence, recalling also that $d_\eps\rightarrow d$ uniformly in $\bar
 \Omega$, by \eqref{carvar}, \eqref{contodist1} and \eqref{contodist2} we get
 \[
 \lambda_k(\Omega) \le \liminf_{\eps\rightarrow 0^+} \frac{  \int_\Omega
   d_\eps \Sk_k(D^2 d_\eps) dx }{\int_\Omega d_\eps^{k+1} dx} \le
 \frac{n(k+2)}{n-k +1} \frac{P(\Omega)^{k+1}}{|\Omega|^{k+2}}
 W_{k-1}(\Omega).
 \]
\end{proof}
\begin{rem}
  We emphasize two particular cases of \eqref{polyaut}.
  First, for $k=1$ it becomes
  \[
  \lambda_1(\Omega) \le 3 \frac{P^2(\Omega)}{|\Omega|^2},
  \]
  that is exactly the Makai estimate contained in
  \cite{mak}. Moreover, for $k=n=2$, the estimate \eqref{polyaut} is
  \[
  \lambda_2(\Omega) \le 4 \frac{P(\Omega)^4}{|\Omega|^4}. 
  \]
\end{rem}
\begin{rem}
  \label{cri}
  We recall that an upper bound of the Dirichlet
  eigenvalue of the Monge-Amp\`ere operator on convex smooth domain with
  fixed measure has been given in \cite{bntpoin}. More precisely,
  the authors prove that
  \begin{equation}
    \label{estellips}
    \lambda_n(\Omega) \le \lambda_n(\Omega\diesis).
  \end{equation}
 Furthermore, according to the invariance under volume
 preserving affine transformations of $\det D^2 u$, they prove that
 the equality holds if and only if $\Omega$ is an ellipsoid. Clearly,
 if $\Omega$ is a 
 smooth convex set with fixed measure, the quantities in
 \eqref{estellips} remain bounded, while the right-hand side of
 \eqref{polyaut} diverges if, for example, $P(\Omega)\rightarrow
 +\infty$. As matter of fact, \eqref{estellips} cannot hold for
 $\lambda_k(\Omega)$, since it may diverge as $P(\Omega)\rightarrow
 +\infty$ and $|\Omega|$ fixed, as shown by the following example.
\end{rem}
\begin{ex}
  \label{exellipse}
  
  \begin{figure}[h]
    \includegraphics[scale=.5]{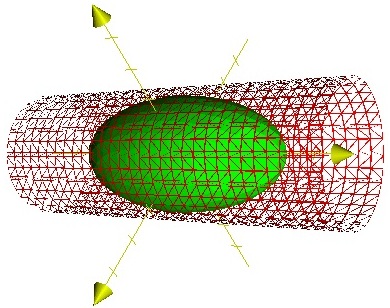}
    \centering \caption{The domains $\mathcal E_a$ and $\mathcal C_{2a}$
      of Example \ref{exellipse} }
  \end{figure}
  For sake of simplicity, we consider the case $n=3$ and $k=2$.

  Let $\mathcal E_a$ be the ellipsoid $\mathcal E_a=\{(x,y,z)\in
  \mathbb R^3\colon \frac{1}{a^2}(x^2+y^2)+a^4 z^2=1\}$. Clearly,
  $|\mathcal E_a|=\frac 4 3 \pi$,  and $\mathcal E_a \cap \{z=0\}=D_a$ is the
  disk of $\R^2$ centered at the origin with radius $a>0$. 
  Let $u$ be an eigenfunction of $\Sk_2$ in $\mathcal E_a\subset
  \R^3$, relative to $\lambda$, and $w$ be an eigenfunction of the
  Monge-Amp\`ere operator in $D_{2a}\subset \R^2$ relative to
  $\mu$, that is 
\begin{equation*}
  \left\{
    \begin{array}{ll}
      \Sk_2(D^2u)=\lambda (-u)^2 &\text{in } \mathcal E_a,\\
      u=0 &\text{on } \de \mathcal E_a,
    \end{array}
  \right. \quad\qquad
  \left\{
    \begin{array}{ll}
      \det (D^2w)=\mu (-w)^2 &\text{in } D_{2a},\\
      w=0 &\text{on } \de D_{2a}.
    \end{array}
  \right.
\end{equation*}
By the definition of $\Sk_2$, the function
\[
v(x,y,z):=w(x,y),\;\;
(x,y,z)\in\mathcal C_{2a}= \{(x,y,z) \colon (x,y)\in
D_{2a},\, z\in \R\}
\]
verifies 
\begin{equation*}
  \left\{
    \begin{array}{ll}
      \Sk_2(D^2v)=\mu (-v)^2 &\text{in } \mathcal E_{a},\\
      v < 0 &\text{on } \de \mathcal E_{a}.
    \end{array}
  \right.
\end{equation*}
Then, an argument based on the maximum principle for fully
nonlinear elliptic equations (see \cite[Theorem 17.1]{gt}) gives that
\begin{equation}
  \label{comp}
  \lambda \ge \mu.
\end{equation}
Finally, being
\[
\mu = \frac{\int_{D_{2a}}(-w)\det D^2 w
  dx}{\int_{D_{2a}}(-w)^3 dx} \sim \frac {1}{|D_{2a}|^2}, 
\]
by \eqref{comp} we have that $\lambda\rightarrow +\infty$ as
$a\rightarrow 0$. 
\end{ex}


\begin{thebibliography}{10}

\bibitem{bntpoin}
B.~Brandolini, C.~Nitsch, and C.~Trombetti.
\newblock {New isoperimetric estimates for solutions to Monge-Amp{\`e}re
  equations}.
\newblock {\em Annales de l'Institut Henri Poincar{\'e} (C) Analyse non
  lin{\'e}aire}, 26(4):1265--1275, July 2009.

\bibitem{bnt10}
B.~Brandolini, C.~Nitsch, and C.~Trombetti.
\newblock {An upper bound for nonlinear eigenvalues on convex domains by means
  of the isoperimetric deficit}.
\newblock {\em Archiv der Mathematik}, 94(4):391--400, Mar. 2010.

\bibitem{bt07}
B.~Brandolini and C.~Trombetti.
\newblock {A symmetrization result for Monge-Amp{\`e}re type equations}.
\newblock {\em Math. Nachr.}, 280(5-6):467--478, Apr. 2007.

\bibitem{bz}
Y.~D. Burago and W.~A. Zalgaller.
\newblock {\em {Geometric inequalities}}.
\newblock Grundlehren der matematischen wissenshaften {\bf 285},
  Springer-Verlag, Berlin, 1988.

\bibitem{cgweb}
G.~Crasta and F.~Gazzola.
\newblock {Web functions: survey of results and perspectives}.
\newblock {\em Rend. Istit. Mat. Univ. Trieste}, 33:313--326, 2001.

\bibitem{ga09}
N.~Gavitone.
\newblock {Isoperimetric estimates for eigenfunctions of Hessian operators}.
\newblock {\em Ricerche di Matematica}, 58(2):163--183, Sept. 2009.

\bibitem{geng}
D.~Geng, C.~Qu, and Q.~Yu.
\newblock {The eigenvalue problem for Hessian operators}.
\newblock {\em Nonlinear Anal.}, 25:27--40, 1995.

\bibitem{gt}
D.~Gilbarg and N.~S. Trudinger.
\newblock {\em {Elliptic partial differential equations of second order}}.
\newblock Springer-Verlag, second edition, 1983.

\bibitem{ivo}
Ivochkina.
\newblock {Solution of the Dirichlet problem for the equation of curvature of
  order m}.
\newblock {\em Dokl. Akad. Nauk. SSSR, {\bf 229}, 1988, {\rm English
  translation in} Soviet Math. Dokl.}, 37:322--325, 1988.

\bibitem{kor}
N.~J. Korevaar.
\newblock {Convex solutions to nonlinear elliptic and parabolic boundary value
  problems}.
\newblock {\em Indiana Univ. Math. J.}, 32(4):603--614, 1983.

\bibitem{lionsma}
P.-L. Lions.
\newblock {Two remarks on Monge-Amp{\`e}re equations}.
\newblock {\em Ann. Mat. Pura Appl. (4)}, 142:263--275, 1985.

\bibitem{lmx10}
P.~Liu, X.-N. Ma, and L.~Xu.
\newblock {A Brunn-Minkowski inequality for the Hessian eigenvalue in
  three-dimensional convex domain}.
\newblock {\em Advances in Mathematics}, 225(3):1616--1633, Oct. 2010.

\bibitem{mak}
E.~Makai.
\newblock {On the principal frequency of a convex membrane and related
  problems}.
\newblock {\em Czech. Math. J.}, 9(1):66--70, 1959.

\bibitem{pw61}
L.~E. Payne and H.~Weinberger.
\newblock {Some isoperimetric inequalities for membrane frequencies and
  torsional rigidity}.
\newblock {\em J. Math. Anal. Appl.}, 2:210--216, 1961.

\bibitem{po61}
G.~P{\'o}lya.
\newblock {Two more inequalities between physical and geometrical quantities}.
\newblock {\em J. Indian Math. Soc. (N.S.)}, 24:413--419, 1961.

\bibitem{sa12}
P.~Salani.
\newblock {Convexity of solutions and Brunn-Minkowski inequalities for Hessian
  equations in {$\mathbb R^3$}}.
\newblock {\em Advances in Mathematics}, 229(3):1924--1948, Feb. 2012.

\bibitem{schie}
D.~Schieborn.
\newblock {\em {Viscosity Solutions of Hamilton-Jacobi Equations of Eikonal
  Type on Ramified Spaces}}.
\newblock PhD thesis, T{\"u}bingen, 2006.

\bibitem{schn}
R.~Schneider.
\newblock {\em {Convex bodies: the {B}runn-{M}inkowski theory}}.
\newblock Cambridge University Press, Cambridge, 1993.

\bibitem{trudiso}
N.~S. Trudinger.
\newblock {On new isoperimetric inequalities and symmetrization}.
\newblock {\em J. Reine Angew. Math.}, 488:203--220, 1997.

\bibitem{trudi1}
N.~S. Trudinger and X.-J. Wang.
\newblock {Hessian measures, I}.
\newblock {\em Topol. Methods Nonlinear Anal.}, 10(2):225--239, 1997.

\bibitem{trudi2}
N.~S. Trudinger and X.-J. Wang.
\newblock {Hessian measures, II}.
\newblock {\em Ann. Math.}, 150(2):579--604, 1999.

\bibitem{wangeigen}
X.~Wang.
\newblock {A class of fully nonlinear elliptic equations and related
  functionals}.
\newblock {\em Indiana Univ. Math. J.}, 43:25--54, 1994.

\end{thebibliography}
\end{document}